\DeclareMathOperator{\im}{Im}
\newcommand{\eps}{\varepsilon}
\renewcommand{\ge}{\geqslant}
\renewcommand{\le}{\leqslant}
\newcommand{\N}{\mathbb{N}}
\renewcommand{\c}{\mathcal}
\newcommand{\cA}{\ensuremath{\mathcal A}}
\newcommand{\cR}{\ensuremath{\mathcal R}}
\newcommand{\cC}{\ensuremath{\mathcal C}}
\newcommand{\cH}{\ensuremath{\mathcal H}}
\newcommand{\cQ}{\ensuremath{\mathcal Q}}
\theoremstyle{plain}
\newtheorem{theorem}{Theorem}[section]
\newtheorem{conjecture}[theorem]{Conjecture}
\newtheorem{proposition}[theorem]{Proposition}
\newtheorem{lemma}[theorem]{Lemma}
\theoremstyle{definition}
\newtheorem{definition}[theorem]{Definition}
\newtheorem{claim}{Claim}[theorem]
\title{Embedding loose spanning trees in $3$-uniform hypergraphs}
\author{Yanitsa Pehova\thanks{Department of Mathematics, London School of Economics, WC2A 2AE London, United Kingdom. Email: \href{mailto:y.pehova@lse.ac.uk}{\nolinkurl{y.pehova@lse.ac.uk}}. This author was supported by the Engineering and Physical Sciences Research Council, UK Research and Innovation [grant number EP/V038168/1].} \and Kalina Petrova \thanks{Department of Computer Science, ETH, 8092 Z\"urich, Switzerland.
			Email: \href{mailto:kalina.petrova@inf.ethz.ch}{\nolinkurl{kalina.petrova@inf.ethz.ch}}. This author was supported by grant no. CRSII5 173721 of the Swiss National Science Foundation.}}
\date{}
\begin{document}

\maketitle

\begin{abstract}
In 1995, Koml\'os, S\'ark\"ozy and Szemer\'edi showed that every large $n$-vertex graph with minimum degree at least $(1/2 + \gamma)n$ contains all spanning trees of bounded degree. We consider a generalization of this result to loose spanning hypertrees in $3$-graphs, that is, linear hypergraphs obtained by successively appending edges sharing a single vertex with a previous edge. We show that for all $\gamma$ and $\Delta$, and $n$ large, every $n$-vertex $3$-uniform hypergraph of minimum vertex degree $(5/9 + \gamma)\binom{n}{2}$ contains every loose spanning tree $T$ with maximum vertex degree $\Delta$. This bound is asymptotically tight, since some loose trees contain perfect matchings.
\end{abstract}

\section{Introduction}

Given a graph, finding conditions that guarantee the existence of various spanning subgraphs is a widely studied area of extremal combinatorics. A prime example of a condition of this type was famously given by Dirac in 1952, who showed that every $n$-vertex graph with minimum degree at least $n/2$ contains a Hamilton cycle. Another classical result in the same spirit, by Koml\'os, S\'ark\"ozy and Szemer\'edi \cite{kss1995}, states that for large $n$, any $n$-vertex graph with minimum degree $(1/2+\gamma)n$ contains every spanning tree of bounded degree. The constant $1/2$ is best possible as can be seen by considering a graph that consists of two cliques on $n/2$ vertices each. This result was later generalised in multiple directions: the same set of authors showed that the same holds for spanning trees of maximum degree $c n/\log n$ for a small constant $c$ \cite{kss2001}, and Csaba, Levitt, Nagy-Gy\"orgy, and Szemer\'edi \cite{csaba2010} showed that the minimum degree condition can be relaxed to $n/2+C\log n$ for a large constant $C$. This question has also been studied in the directed setting \cite{mycroft_naia_1,mycroft_naia_2,kathapurkar22}. 

In this paper we consider the corresponding spanning tree problem for hypergraphs. There is no single ``correct'' way to define a hypertree. An early definition due to Kalai \cite{kalai83} (also see \cite{linial19}) is in terms of simplicial homology. 
A $d$-hypertree (which we call \emph{a simplicial $d$-hypertree} to distinguish it from other hypertrees) is defined as a $d$-dimensional simplicial complex with a full $(d-1)$-skeleton whose $d$-th and $(d-1)$-st reduced rational homology groups vanish. 
A $2$-uniform tree is then a simplicial $1$-hypertree.
For $d\ge2$, such hypertrees are very dense and can be avoided in hypergraphs with arbitrarily large minimum degree (see Proposition~\ref{prop:3ap_example}). 
Motivated by this, we work with a combinatorial definition of a \emph{$k$-uniform $\ell$-tree} --- a $k$-uniform hypergraph admitting an edge ordering $e_1,...,e_m$ such that each $e_i$ consists of $\ell$ vertices in one previous edge in the ordering and $k-\ell$ new vertices\footnote{More formally, for each $i\ge 2$ there exists $j<i$ such that $e_i\cap \bigcup_{j'<i}e_{j'}\subseteq e_j$ and $|e_i\cap e_j|=\ell$.}. Such orderings we call \emph{valid}, and the edges which can be last in a valid ordering we call \emph{leaves}. This notion is closely related to the notion of a collapsible $d$-hypertree (see \cite{linial19} and Section~\ref{sec:prelim:structure}).

Throughout the rest of this paper, we refer to a $1$-tree as a \emph{loose tree} (also known in the literature as a \emph{linear tree}). Similarly, a $(k-1)$-tree is also known as a \emph{tight tree}. The \emph{minimum $\ell$-degree} $\delta_{\ell}(H)$ of a $k$-uniform hypergraph $H$ is the minimum number of edges any set of $\ell$ vertices is contained in. Minimum $(k-1)$-degree is also known as \emph{codegree}, and minimum $1$-degree is known as \emph{vertex degree}. Maximum degree is defined analogously. Not much is known about extensions of Koml\'os, S\'ark\"ozy and Szemer\'edi's result to general $k$-uniform $\ell$-trees, apart from a recent result of Pavez-Sign{\'e}, Sanhueza-Matamala and Stein \cite{pavez2024dirac} which shows that a minimum codegree of $(1/2+\gamma)n$ forces the existence of any tight spanning tree of bounded vertex degree.

In the current work, we investigate minimum vertex degree conditions that guarantee the existence of loose spanning trees in $3$-uniform hypergraphs (also referred to as $3$-graphs from now on).
A hypergraph is a loose tree if and only if it is connected and has no Berge cycles (see Proposition~\ref{prop:berge}). This can be seen as an analogue of the acyclicity and connectedness conditions that define a $2$-uniform tree, making loose trees a natural generalization of trees in hypergraphs.

What minimum degree conditions in a $3$-graph force it to contain every loose spanning tree of bounded vertex degree? How are they related to thresholds for containment of Hamilton cycles and paths?

As in the graph case, a loose path is a special case of a bounded-degree loose tree. Bu\ss, H\`an and Schacht \cite{buss2013minimum} showed that if every vertex in an $n$-vertex $3$-graph is contained in at least $(7/16+\eps)\binom{n}{2}$ edges, then it contains a loose Hamilton cycle --- an $n$-vertex cycle whose adjacent edges share exactly one vertex. The constant $7/16$ is best possible, and in a later paper Han and Zhao \cite{han2015minimum} gave the exact threshold. A loose Hamilton path is not a subgraph of a loose Hamilton cycle, but slightly modifying the proof in~\cite{buss2013minimum} shows that $7/16$ is also the correct asymptotic threshold for loose Hamilton paths.

\begin{figure}
    \begin{center}
    \includegraphics[scale=0.9]{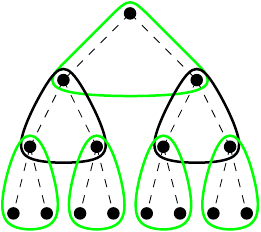}
    \end{center}
    \label{fig:binary-tree}
    \caption{The binary loose tree with $4$ levels. A perfect matching is shown in green. The dashed lines represent the edges of the simple binary tree from which the loose tree is obtained.}
\end{figure}

In light of this, one may conjecture that, like in the graph case, $3$-graphs with minimum vertex degree $(7/16+\eps)\binom{n}{2}$ also contain every loose tree of bounded degree. However, this is not the case. For any $b\ge 2$, consider the \emph{binary loose tree} $T_b$ whose edge set is the set of cherries consisting of a parent and two child vertices in a $b$-layer (simple) binary tree. An example for $b=4$ is shown in Figure~\ref{fig:binary-tree}. If the number of levels $b$ is even, $T_b$ contains a perfect matching, so any $3$-graph without a perfect matching will also not contain $T_b$. The asymptotic minimum degree threshold for perfect matchings in $3$-graphs was shown to be $5/9$ by H\`an, Person and Schacht \cite{han2009_pm}. Their asymptotic bound was later made exact by K\"uhn, Osthus and Treglown \cite{treglown2011full} and independently by Khan \cite{khan2013perfect}. This is tight as witnessed by the hypergraph on vertex set $A\cup B$ with $|A|=n/3-1$ and $|B|=2n/3+1$ consisting of all edges with at least one vertex in $A$. 

We show that this degree condition suffices for any spanning loose tree of bounded degree.
\begin{theorem}\label{thm:main}
For all $\gamma>0$ and $\Delta\in \N$ there exists $n_0\in\N$ such that any $3$-graph $H$ on $n\ge n_0$ vertices with $n$ odd and $\delta_1(H)\ge (5/9+\gamma)\binom{n}{2}$ contains every $n$-vertex loose tree $T$ with $\Delta_1(T)\le \Delta$.
\end{theorem}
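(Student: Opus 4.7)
My plan is to follow the absorption method, adapted to loose trees in 3-graphs. Since the degree threshold $5/9$ coincides with the minimum vertex-degree threshold for perfect matchings of Hán, Person and Schacht~\cite{han2009_pm}, the absorber construction should ultimately exploit matching-style techniques; the remainder of the tree would be embedded into a regular structure obtained from the weak 3-graph regularity lemma, in the spirit of the tight-tree argument of Pavez-Signé, Sanhueza-Matamala and Stein~\cite{pavez2021dirac,pavez2021diracfull}.

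\textbf{Main steps.} Fix constants $\eps \ll \mu \ll \gamma$. First, construct an \emph{absorbing loose path} $P_A \subseteq H$ of size at most $\mu n$ with the property that for every set $U \subseteq V(H)\setminus V(P_A)$ of size at most $\mu^2 n$ (and of appropriate parity), there is a loose path on $V(P_A)\cup U$ sharing the endpoints of $P_A$. This would be done probabilistically: show that every vertex $v$ lies in many short ``absorbing gadgets'' (small loose sub-hypergraphs admitting two valid orderings differing by the insertion of $v$), take a random sub-family, and link them into a single loose path using the minimum vertex-degree condition $(5/9+\gamma)\binom{n}{2}$ for connectivity. Next, set aside a small reservoir $R$ for linking, and apply the weak regularity lemma to $V(H)\setminus (V(P_A)\cup R)$. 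Decompose $T$ along its valid edge ordering as $T_A \cup T_1 \cup \dots \cup T_s$, where $T_A$ maps onto $P_A$ and each $T_i$ is a sub-tree of bounded size. Embed $T_A$ onto $P_A$, then embed each $T_i$ greedily edge by edge into regular triples of clusters, using $\Delta_1(T)\le\Delta$ to control how many edges land at any single vertex and using $R$ to hop between clusters. Finally, invoke the absorbing property of $P_A$ to swallow the leftover at most $\mu^2 n$ vertices.

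\textbf{Main obstacle.} The delicate step is proving that at the $(5/9+\gamma)$ vertex-degree threshold every vertex lies in sufficiently many absorbing gadgets; unlike the codegree setting this bound is asymmetric, and the near-extremal $A\cup B$-type configuration with $|A|\approx n/3$ must not cause such gadgets to be scarce near any vertex, which likely requires a stability-style step or a careful choice of gadget shape (a short loose path with a swappable central edge). A secondary technical point is making the tree decomposition robust enough to handle both extremes---trees with many leaves, where branches are short and can be attached sequentially, and trees containing long loose sub-paths, which must be laid out carefully within regular triples---most likely via a case split based on the structure of $T$.
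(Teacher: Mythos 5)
Your high-level plan (absorb $+$ regularity for the almost-cover) matches the paper's, but the central absorbing device you propose does not work for trees, and this is a genuine gap rather than a technical detail. An absorbing loose path $P_A$ that can swallow a leftover set $U$ by re-routing itself through $U$ is the right tool for loose Hamilton cycles and paths, because there the spanning structure \emph{is} a path and $P_A$ simply becomes a longer segment of it. For a general bounded-degree loose tree this fails at the first step: you need a subtree $T_A\subseteq T$ of size $\Theta(\mu n)$ that is itself a loose path to map onto $P_A$, but a bounded-degree loose tree need not contain any loose path longer than $O(\log n)$ (the binary loose tree in Figure~1 is exactly such an example). Moreover, after absorption $P_A\cup U$ is a \emph{longer} loose path, so the preimage in $T$ would have to grow accordingly, which cannot be reconciled with a fixed decomposition of $T$. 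The paper instead uses vertex-swapping absorbers: pairs of $\Delta$-stars $(S_{v_2},S_{v_3})$ such that the embedded vertices $v_2,v_3$ can be exchanged for two uncovered vertices $w_2,w_3$ while an extra edge $\{w_1,v_2,v_3\}$ is appended as a \emph{leaf edge} of the tree (Lemmas~\ref{lemma:absorbing_lemma}--\ref{lemma:covering_lemma}). This is compatible with every loose tree because one reserves $\delta n$ leaf edges of $T$ to be embedded last, two new vertices at a time. Incidentally, your worry about gadget scarcity at $5/9$ is misplaced: the absorbers only need minimum degree $(1/2+\gamma)\binom{n}{2}$ and are counted by a direct common-neighbourhood argument; the $5/9$ threshold is consumed elsewhere.

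The second place where your sketch is too thin is the almost-spanning embedding. Embedding the pieces $T_i$ ``greedily into regular triples, using $R$ to hop between clusters'' presupposes that you can connect a piece sitting in one regular triple to its child piece sitting in another, but a minimum vertex-degree condition gives no control over which pairs of clusters are joined by dense regular triples (the reduced graph can have pairs of codegree $0$). The paper resolves this by finding a \emph{tight Hamilton cycle} $\cC$ in the reduced graph (possible at $5/9+\gamma$ by Theorem~\ref{thm:degree-condition-tight-Hamiltonicity}), assigning each piece of $T$ to an edge of the near-perfect matching formed by every third edge of $\cC$, and walking along $\cC$ through $O(t)$ layers of the tree to travel from one matching edge to the next while keeping the cluster loads balanced (Lemma~\ref{lemma:vertex_assignment}). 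Without some connected structure of this kind in the reduced graph, the connection step of your plan has no justification.
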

This answers a question of Stein~\cite[Section 9]{stein2020tree}, and is the first result giving a minimum vertex degree condition for spanning hypertrees other than Hamilton paths. Again, our bound is asymptotically tight because a complete binary loose tree with an even number of levels contains a perfect matching and the $5/9$ threshold for perfect matchings is tight.

Note that every loose tree can be made tight by adding edges while keeping the vertex set unchanged and the maximum vertex degree bounded, so bounded-degree loose trees appear at codegree asymptotically $n/2$ by the main result in \cite{pavez2024dirac}. 
This is tight because there are $3$-graphs with minimum codegree $n/2-O(1)$ which do not contain a perfect matching, thus avoiding the binary loose tree $T_b$ --- for instance, consider the $3$-graph on vertex set $A \cup B$ with $|A|=|B|=n/2$ both odd where all edges with an even number of vertices in $B$ are present. However, whereas a minimum codegree condition implies some minimum degree condition (in fact, for any $k$-graph $G$ and $0<j<j'<k$ we have $\delta_{j'}(G)/\binom{n-j'}{k-j'}\le \delta_j(G)/\binom{n-j}{k-j}$, which implies that the threshold coefficients for any subgraph containment decrease with $j$ increasing), there are $3$-graphs with quadratic minimum degree in which some pairs of vertices have codegree $0$. For example, the $3$-graph on vertex set $A \cup B$ with edges all triples whose intersection with $B$ has size at least $2$, has minimum vertex degree $(1-o(1))\binom{n}{2}$ for $|A| \ll n$ but minimum codegree $0$ as long as $|A| \geq 2$. Thus, a minimum vertex degree condition constitutes a much weaker assumption on the underlying graph.\\

Our proof employs a classical recipe prescribed by the absorbing method. We give a high-level sketch of the main steps of the proof. 
\vspace{-10pt}\paragraph{Step 1.} Find a partial embedding of $T$ and a set $\cA$ of small absorbing substructures in $H$ with two key properties. First, the absorbing structures are flexible, that is, some vertices in them can be exchanged for other vertices of the host graph. Second, the partial embedding interacts appropriately with all elements of $\cA$. Both properties are illustrated in Figure~\ref{fig:absorbing-tuple}. We show how to find $\cA$ with these two properties in Lemmas~\ref{lemma:absorbing_set} and \ref{lemma:inhabiting_lemma}.
\vspace{-10pt}\paragraph{Step 2.} Extend this embedding to an embedding of a subtree $T'\subseteq T$ covering all but $\nu n$ vertices of $T$. For this we employ the hypergraph regularity method. Existing embedding results for spanning trees that use regularity tend to use a disconnected structure, such as a perfect matching, in the reduced graph $\cR$ of an $\eps$-regular partition $\{V_0,...,V_t\}$~\cite{kss1995,kss2001,pavez2024dirac}. However, they require certain connectivity properties of $\c R$ which are not guaranteed by our minimum vertex degree condition. We introduce a novel approach which allows us to instead utilize a connected structure in $\cR$. In our case this is a tight Hamilton cycle $\cC\subseteq \cR$. By regularity, finding the required embedding reduces to finding an assignment $a:V(T')\to [t]$ of the vertices of $T'$ to the clusters of the regularity partition such that every edge of $T'$ is mapped to an edge of $\cC$ and the number of vertices assigned to each $V_i$ is significantly smaller than $|V_i|$ (more precisely, at most $(1-\zeta) |V_i|$ for some $\zeta$ such that $\eps\ll\zeta\ll \nu$). See Lemma~\ref{lemma:almost_embedding} for this reduction. Finding such an assignment is the main difficulty in this step of the proof. To do this, we split $T'$ into pieces, assign these pieces to different edges of a perfect matching in $\cC$, and then ``travel'' along $\cC$ to connect the pieces to each other. We make sure to assign only a constant number of vertices during these connections. See Lemma~\ref{lemma:vertex_assignment} for the construction of this assignment.
\vspace{-10pt}\paragraph{Step 3.} Use $\cA$ to complete the embedding of $T$. We show this is possible in Lemma~\ref{lemma:absorbing_lemma}.

This paper is organised as follows. In Section~\ref{sec:prelim} we start with some preliminaries on loose trees, and the weak regularity lemma for hypergraphs. In Section~\ref{sec:almost} we show how to embed all but an arbitrarily small proportion of a bounded-degree loose tree in a graph with minimum vertex degree $(5/9+\gamma)\binom{n}{2}$. In Section~\ref{sec:absorb} we give an absorption strategy for completing the embedding from Section~\ref{sec:almost}. In Section~\ref{sec:proof}, we put these two ingredients together and prove Theorem~\ref{thm:main}. Finally, in Section~\ref{sec:concluding} we give some concluding remarks and open problems.

\section{Preliminaries}\label{sec:prelim}
\paragraph{Notation.} Given a $3$-graph $G$, we write $\deg_G(v;A,B)$ for the number of pairs $a\in A, b\in B$ such that $\{v,a,b\}\in E(G)$. Given three disjoint sets $A,B,C$, we write $G[A,B,C]$ for the tripartite subgraph of $G$ consisting of all edges with one vertex in each of $A,B$ and $C$ and $d_G(A,B,C)$ for the number of edges of $G[A,B,C]$ normalised by $|A||B||C|$. In particular, $\deg_G(v;A,B)=d_G(\{v\},A,B)|A||B|$. Usually the host graph is clear from the context and we omit the subscript. We sometimes refer to loose $3$-trees as \emph{loose trees} or just \emph{trees}.

We call an injective mapping $\phi:V(H) \rightarrow V(G)$ \emph{an embedding} of $H$ into $G$ if for any $\{x,y,z\} \in E(H)$, we have that $\{\phi(x), \phi(y), \phi(z)\} \in E(G)$. We denote by $\im(\phi) = \{\phi(x) | x\in V(H) \}$ the image of $\phi$. For some $S \subseteq V(H)$, we also denote $\phi(S) = \{\phi(v) | v \in S\}$.

We use standard notation for the hierarchy of positive constants: we say that a statement holds for $\alpha\ll\beta$ if there exists a non-decreasing function $f$ such that the statement holds if $\alpha<f(\beta)$.

Throughout the following sections, we often index collections of sets cyclically. To describe this, we use the function $m\mapsto m \mod n$ which takes an integer $m$ as input and returns the remainder of $m$ when divided by $n$, which is an integer in $\{0,1,...,n-1\}$.

\subsection{Tight Hamiltonicity in 3-graphs}
Since our proof relies on a certain reduced graph having a tight Hamilton cycle, we make use of the following result. A $3$-uniform tight cycle on vertex set $[n]$  has edges of the form $\{i,i+1,i+2\}$ taken modulo $n$ for all $0\le i\le n-1$.
\begin{theorem}[\cite{reiher2019minimum}]
\label{thm:degree-condition-tight-Hamiltonicity}
For every $\gamma >0$, there exists an $n_0 \in \mathbb{N}$ such that every $3$-graph $G$ on $n > n_0$ vertices with minimum degree $\delta_1(G) \geq (\frac{5}{9}+\gamma) \binom{n}{2}$ contains a tight Hamilton cycle. 
\end{theorem}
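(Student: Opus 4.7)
The plan is to prove this via the absorbing method, now standard for tight Hamiltonicity problems in hypergraphs. The three ingredients are: a short absorbing tight path $P_A$ that can swallow any small leftover set of vertices, a long tight path covering almost all remaining vertices, and a connecting lemma that links any two ordered pairs of vertices by a short tight path. A preliminary step is to set aside a small random reservoir $R\subseteq V(G)$ of size $o(n)$ with the property that every pair of vertices in $V(G)\setminus R$ has many link edges inside $R$; this is obtained by a Chernoff-type argument from the minimum degree hypothesis.

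First I would prove a connecting lemma: for every ordered pair $(u_1,u_2)$ and $(v_1,v_2)$ of disjoint ``good'' pairs (meaning those whose joint link neighborhood is large, which under $\delta_1(G)\ge(5/9+\gamma)\binom{n}{2}$ is the case for all but $o(n^2)$ pairs), there is a tight path of bounded length from $(u_1,u_2)$ to $(v_1,v_2)$ using only reservoir vertices. Iterating a short-step extension and using that the reservoir inherits a similar minimum degree suffices. With the connecting lemma, I would build $P_A$ by showing that every vertex $v$ has $\Omega(n^{c})$ \emph{absorbers}, i.e.\ short tight paths whose internal vertex set can be re-threaded to additionally incorporate $v$ without disturbing the endpoints; then pick a random family via Markov/Chernoff so that every $v$ has many absorbers in the family, and stitch them into a single tight path using the connecting lemma.

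Next I would find the almost-spanning tight path. Apply weak hypergraph regularity to $G[V(G)\setminus(V(P_A)\cup R)]$, producing a reduced 3-graph $\cR$ on $t$ clusters with minimum vertex degree at least $(5/9+\gamma/2)\binom{t}{2}$. Since $\cR$ satisfies the hypothesis of the theorem with much smaller $t$, one can find a structure such as a tight ``Hamilton'' framework on the clusters, or more practically a long tight walk traversing clusters in balanced proportions; this can be turned into a tight path in $G$ covering all but $\nu n$ vertices via a standard counting/greedy embedding lemma, where $\nu\ll |R|/n$. Link its two endpoints, the endpoints of $P_A$, and close the cycle through $R$ using the connecting lemma; finally invoke the absorbing property of $P_A$ to incorporate the $\nu n$ leftover vertices, producing a tight Hamilton cycle.

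The main obstacle is the absorbing lemma at the exact threshold $5/9$. This density sits right at the extremal point for perfect matchings, so the ``space barrier'' construction with $V(G)=A\cup B$, $|A|\approx n/3$, $|B|\approx 2n/3$ and all edges meeting $A$ is only avoided by the $\gamma$-slack; a stability dichotomy is needed. If $G$ is $\gamma$-far from this configuration, one shows each vertex has polynomially many absorbers and the probabilistic argument runs. If $G$ is close to it, one constructs the Hamilton cycle directly by exploiting the quasi-bipartite structure, distributing the $A$-vertices almost evenly along a prescribed tight cycle pattern. Establishing this stability and the direct extremal construction is the technical heart of the proof.
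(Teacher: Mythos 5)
This statement is not proved in the paper at all: it is quoted as a black box from Reiher, R\"odl, Ruci\'nski, Schacht and Szemer\'edi \cite{reiher2019minimum}, so there is no internal proof to compare against. Your outline does identify the general architecture of the actual proof in that reference (reservoir, connecting lemma, absorbing path, almost-cover, closing the cycle), but as a proof it has a concrete error and otherwise defers precisely the parts that make the result a long and difficult paper.

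The concrete error: you claim that under $\delta_1(G)\ge(5/9+\gamma)\binom{n}{2}$ all but $o(n^2)$ pairs have large joint link, and that a random reservoir $R$ gives every surviving pair many link edges inside $R$. Neither holds. A minimum vertex degree condition only bounds the \emph{average} codegree (at about $(5/9+\gamma)n$), and a constant fraction of pairs --- up to roughly $4/9$ of them --- can have codegree $0$; the present paper even exhibits such a host graph in its introduction (vertex set $A\cup B$ with all triples meeting $B$ in at least two vertices: minimum vertex degree $(1-o(1))\binom{n}{2}$, yet every pair inside $A$ has codegree $0$, and $|A|$ can be $\Theta(n)$). A pair with codegree $0$ in $G$ has codegree $0$ into any reservoir, and since every consecutive pair on a tight Hamilton cycle must have positive codegree, the connecting and absorbing steps must be routed exclusively through genuinely ``connectable'' pairs; managing the possibly $\Theta(n^2)$ bad pairs, and guaranteeing that the absorbing path and the long path end in connectable pairs, is one of the central difficulties of \cite{reiher2019minimum} and cannot be assumed away. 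Two further points: invoking ``$\cR$ satisfies the hypothesis of the theorem'' to find a spanning structure in the reduced graph is circular (for the almost-cover an almost-perfect fractional matching in $\cR$ suffices and should be argued directly), and the connecting lemma and absorber supply at density $5/9+\gamma$ --- which you yourself label the technical heart, whether handled by a stability dichotomy as you propose or directly as in the original paper --- are left entirely unproved. As it stands the proposal is a roadmap, not a proof.
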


\subsection{The structure of hypertrees}\label{sec:prelim:structure}

In this section we give some basic facts and definitions about trees in hypergraphs, which are helpful in developing an intuition for their structure.

Recall that a $k$-uniform $\ell$-tree is a $k$-uniform hypergraph whose edges $e_1,...,e_m$ can be ordered so that each $e_i$ consists of $\ell$ vertices from a previous edge in the ordering and $k-\ell$ new vertices. We call the $\ell$ pre-existing vertices \emph{parents} and the $k-\ell$ new vertices \emph{children}. 

\begin{definition}
A \emph{layering} of a $3$-uniform rooted loose tree $T$ with root $r$ is a partition of its vertex set into layers $L_1, \dots, L_{\ell}$ such that $L_1 = \{r\}$ and for each $e = \{u,v,w\} \in E(T)$ such that $u$ is the parent of $v$ and $w$ in $T$, there is some $i\in [\ell-2]$ such that $u \in L_i$, $v \in L_{i+1}$, and $w \in L_{i+2}$.
\end{definition}
Note that each rooted loose tree has at least one layering, which can be obtained by setting $L_1 = \{r\}$ and then repeatedly considering any edge $e = \{u,v,w\}$ of the tree such that $u\in L_i$, and $v$ and $w$ have not yet been assigned a layer, and adding $v$ to $L_{i+1}$ and $w$ to $L_{i+2}$. The choice of assigning $v$ to $L_{i+1}$ and $w$ to $L_{i+2}$ instead of vice versa is arbitrary, which is why as soon as $T$ has at least one edge, it has more than one layering. For instance, if $E(T) = \{\{r, x, y\}\}$, then $T$ has two layerings --- $L_1 = \{r\}, L_2 = \{x\}, L_3 = \{y\}$ and $L_1 = \{r\}, L_2 = \{y\}, L_3 = \{x\}$.

Other than the layering, which is a commonly used property of $2$-uniform trees, loose trees can be characterised by their lack of cycles. The appropriate notion of a cycle here is the \emph{Berge cycle}, which is a cyclic sequence of alternating distinct vertices and edges $e_1, v_1, \dots, e_k, v_k, e_{k+1}=e_1$ such that $v_i \in e_{i} \cap e_{i+1}$ for every $i\in[k]$.

\begin{proposition}\label{prop:berge}A connected $3$-uniform hypergraph $H$ is a loose tree if and only if it contains no Berge cycles.
\end{proposition}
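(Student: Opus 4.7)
The plan is to identify Berge cycles in a 3-graph $H$ with ordinary cycles in its bipartite \emph{incidence graph} $B=B(H)$, whose parts are $V(H)$ and $E(H)$ with an edge between $v$ and $e$ iff $v\in e$. Under this correspondence, $H$ is connected iff $B$ is connected, and $H$ is Berge-cycle-free iff $B$ has no cycle; hence $H$ is connected and Berge-cycle-free iff $B$ is a tree. This reformulation will drive both directions.

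For the ``only if'' direction I would induct on $m=|E(H)|$. In a valid ordering $e_1,\dots,e_m$, the last edge $e_m$ meets $\bigcup_{j<m}e_j$ in a single vertex, so $e_m$ has two \emph{private} vertices lying in no other edge. If a Berge cycle used $e_m$, then the two distinct vertices of the cycle adjacent to $e_m$ would each lie in $e_m$ and in some other edge of $H$, contradicting privacy. So no Berge cycle contains $e_m$, and $e_1,\dots,e_{m-1}$ is a valid ordering of the strictly smaller loose tree $H-e_m-\{\text{private vertices of } e_m\}$, to which induction applies. The base case $m=1$ is immediate.

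For the ``if'' direction I would again induct on $m$, the key step being to locate a hyperedge with two private vertices that can be stripped off. Since $B$ is a tree on $n+m$ nodes with $n+m-1=3m$ edges, we have $n=2m+1$. Each edge-node of $B$ has degree exactly $3$, so every leaf of $B$ is a degree-$1$ vertex of $H$. Bounding the number of leaves $L$ via the degree sum at vertex-nodes gives
\[
3m \;=\; \sum_v \deg_B(v) \;\geq\; L + 2(n-L) \;=\; 2n-L,
\]
so $L\geq 2n-3m=m+2$. Distributing these $L$ leaves among the $m$ edge-nodes, some hyperedge $e$ is incident to at least two leaves, i.e.\ contains two private vertices $v_1,v_2$. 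Pruning $v_1,v_2$ and the now-pendant node $e$ from $B$ yields a smaller tree, which is the incidence graph of $H':=H-e-\{v_1,v_2\}$; thus $H'$ is connected and Berge-cycle-free, and by induction admits a valid ordering $e_1,\dots,e_{m-1}$. The third vertex of $e$ lies in some $e_j$, so appending $e$ extends this to a valid ordering of $H$.

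The main obstacle is the backward direction: one must simultaneously guarantee an edge with two private vertices and that deleting it preserves both connectedness and the absence of Berge cycles. The incidence-graph reformulation resolves both at once, reducing the task to a routine leaf-count on a tree and a leaf-pruning step. The forward direction and the base cases are short once the correspondence between Berge cycles and cycles in $B$ is noted.
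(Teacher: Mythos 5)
Your proof is correct but takes a genuinely different route from the paper's, most visibly in the backward direction. The paper argues by contradiction from a \emph{maximal} valid ordering: it first uses the no-Berge-cycle hypothesis to establish linearity, then shows that an edge $e_{k+1}$ which cannot be appended must meet the already-ordered part in two distinct vertices $v\neq w$, and closes a loose path from $v$ to $w$ inside the ordered part with $e_{k+1}$ into a Berge cycle. You instead pass to the bipartite incidence graph $B$, note that ``connected and Berge-cycle-free'' is equivalent to ``$B$ is a tree'', and run a leaf-pruning induction: the identity $3m=n+m-1$ forces $L\ge m+2$ degree-one vertices, so by pigeonhole some hyperedge has two private vertices and can be stripped off, building the valid ordering from the outside in rather than greedily from the inside out. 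Your approach makes the analogy with the 2-uniform leaf count completely transparent and avoids constructing the connecting loose path; the paper's avoids the incidence-graph bookkeeping and the counting. Two small points you should make explicit (both one-liners, not gaps in substance): the bound $\sum_v\deg_B(v)\ge L+2(n-L)$ uses that a connected $H$ with $m\ge1$ has no isolated vertices, and the final claim that the third vertex of $e$ lies in some other edge holds because otherwise $e$ together with its three vertices would form a component of $H$, contradicting connectivity when $m\ge2$.
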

\begin{proof}
    The forward implication follows from the fact that the edges of a Berge cycle cannot be ordered in a valid way.

    For the backward implication, suppose that $H$ contains no Berge cycle. First note that a pair of edges $e,e'$ sharing exactly two vertices $v,w$ form a Berge cycle given by the sequence $eve'we$. So $H$ is linear, meaning every two edges intersect in at most one vertex. Take a maximal valid ordering $e_1,...,e_k$ of edges of $H$. Note that these edges form a connected subgraph of $H$. If $k=e(H)$, we have that $H$ is a loose tree. Otherwise, since the graph $H$ is also connected and linear, for some $e_{k+1} \in E(H)\setminus\{e_1,\dots,e_k\}$, we have that $e_{k+1}\cap e_j=\{v\}$ for some vertex $v$. Since $e_{k+1}$ cannot be appended to the valid ordering, there must exist another edge $e_{j'}$ such that $e_{k+1}\cap e_{j'}=\{w\}$ for some vertex $w\neq v$. The subgraph $\{e_1,...,e_k\}$ is connected so there is a loose path $P=v\dots w$ in it, which is also a Berge path. Then the sequence $e_{k+1}Pe_{k+1} = e_{k+1} v \dots w e_{k+1}$ is a Berge cycle, which leads to a contradiction.
\end{proof}

\subsection{Simplicial hypertrees}
In this section, we give an example of a simplicial $2$-hypertree which cannot be guaranteed by any minimum degree or codegree condition. Recall that a simplicial $d$-hypertree $T$ is a $d$-dimensional simplicial complex with a full $(d-1)$-skeleton such that $H_d(T;\mathbb{Q})=H_{d-1}(T;\mathbb{Q})=0$. The former means that $T$ consists of a vertex set $[n]$ (the $0$-faces), all $\binom{n}{\le d-1}$ sets of at most $d-1$ vertices (the $1$-faces through to the $(d-2)$-faces), and some sets of $d$ vertices (the $(d-1)$-faces), which are the hyperedges of our tree. To ensure the latter property we restrict our attention to the special class of \emph{collapsible hypertrees}. A $(d-1)$-face $\tau$ in a $d$-complex is called \emph{exposed} if there is exactly one $d$-face $\sigma$ that contains it. Removing this exposed $(d-1)$-face consists of removing both $\sigma$ and $\tau$. We say that a $d$-complex is \emph{collapsible} if we can iteratively remove exposed $(d-1)$-faces until no faces are left. As mentioned in~\cite{linial19}, if a $d$-dimensional complex is collapsible and has $\binom{n-1}{d}$ $d$-faces, then it is a simplicial $d$-hypertree. We use this fact to give a combinatorial construction of a simplicial $2$-hypertree and an almost-complete $3$-uniform hypergraph which doesn't contain it.

\begin{proposition}\label{prop:3ap_example}
    Let $1/n\ll \delta$ with $n$ even and let $\cA$ be the $3$-uniform hypergraph with vertex set $V=\{1,...,n\}$ and edge set $\{\{i,\lfloor \frac{i+j}{2} \rfloor,j\}:i,j\in [n], j-i \geq 2\}\}$. Let $\c H$ be the $3$-uniform hypergraph on the same vertex set with edge set $E(\c H)=\binom{V}{3}\setminus \binom{[\delta n]}{3}$. Then
    \begin{enumerate}
        \item $\c A$ is a simplicial $2$-hypertree with vertex degrees $O(n)$ and codegrees $O(1)$,
        \item $\c H$ has minimum degree $(1-\delta^2 - o(1))\binom{n}{2}$ and minimum codegree $(1-\delta)n$, and
        \item $\c A\not\subseteq \c H$.
    \end{enumerate}
\end{proposition}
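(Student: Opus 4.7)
My plan is to address the three assertions in turn. Parts (1) and (2) reduce to direct counting and a short observation for collapsibility, while part (3) is the substantive claim and rests on Roth's theorem on three-term arithmetic progressions.

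For (1), first count the edges: the pairs $(i,j)$ with $1 \le i < j \le n$ and $j - i \ge 2$ number exactly $\binom{n}{2} - (n-1) = \binom{n-1}{2}$, matching the number of 2-faces needed for a 2-hypertree. To verify collapsibility, I would observe that the outer pair $\{i,j\}$ of any edge $\{i,\lfloor(i+j)/2\rfloor,j\}$ lies in no other edge of $\cA$, since specifying $i<j$ with $j - i \ge 2$ uniquely determines the entire edge via the definition. Hence every outer pair is exposed from the start, and we may iteratively remove edges in any order using their outer pairs; combined with the quoted criterion from~\cite{linial19}, this shows $\cA$ is a 2-hypertree. For the degree bounds, a vertex $v$ appears as a left or right outer endpoint of at most $2(n-1)$ edges, and as the middle vertex in $O(n)$ edges (since $v = \lfloor(i+j)/2\rfloor$ forces $i+j \in \{2v, 2v+1\}$, giving at most two choices per value of $j-i$), so $\deg(v) = O(n)$. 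A pair $\{a,b\}$ with $a<b$ is the outer pair of at most one edge and appears as an endpoint--middle pair in at most two further edges on each side (from the two possible parities of the opposite endpoint), so codegrees are bounded by $5 = O(1)$.

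For (2), direct counting suffices. A vertex $v \in [\delta n]$ misses exactly the $\binom{\delta n - 1}{2}$ edges of $\binom{[\delta n]}{3}$ passing through it, giving $\deg_\cH(v) = \binom{n-1}{2} - \binom{\delta n - 1}{2} = (1 - \delta^2 - o(1))\binom{n}{2}$; a vertex outside $[\delta n]$ has full degree $\binom{n-1}{2}$. For codegrees, a pair $\{u,v\}\subseteq [\delta n]$ can only be completed to an edge of $\cH$ by a third vertex outside $[\delta n]$, giving codegree $n - \delta n = (1-\delta)n$; any other pair extends by any third vertex and has codegree $n-2$.

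Part (3) is the crux. I would argue by contradiction: suppose $\phi: V(\cA) \to V(\cH) = [n]$ is an embedding, and let $S = \phi^{-1}([\delta n])$, so $|S| = \delta n$. Since $1/n \ll \delta$, Roth's theorem yields a non-trivial three-term arithmetic progression $\{a, a+d, a+2d\} \subseteq S$ with $d \ge 1$. The key observation is that any such set is an edge of $\cA$: indeed $\lfloor (a + (a+2d))/2 \rfloor = a+d$ and $(a+2d)-a = 2d \ge 2$. Then $\phi$ maps this edge into $\binom{[\delta n]}{3}$, which is exactly the set of non-edges of $\cH$, contradicting the assumption that $\phi$ is an embedding. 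The main non-routine ingredient is the observation that $E(\cA)$ contains every 3-AP in $[n]$, so that Roth's theorem applied to $S$ immediately produces a forbidden edge.
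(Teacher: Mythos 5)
There is a genuine gap in your collapsibility argument for part (1). You claim that the outer pair $\{i,j\}$ of an edge ``lies in no other edge of $\cA$'' because the pair determines the edge. What is true is that $\{i,j\}$ is the \emph{outer} pair of exactly one edge; but the same pair can occur in another edge as an endpoint--middle pair. For instance, $\{1,3\}$ lies in $\{1,2,3\}$ (as its outer pair) and also in $\{1,3,5\}$ (where $3=\lfloor(1+5)/2\rfloor$ is the middle vertex). So it is false that every outer pair is exposed in the full complex, and the assertion that one can remove edges ``in any order'' via their outer pairs does not stand. Your own codegree computation, where you correctly note that a pair can appear in up to four additional edges as an endpoint--middle pair, contradicts the exposedness claim two sentences earlier.

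The gap is repairable, and the repair is exactly what the paper does: order the edges increasingly by the difference $j-i$ between their extreme vertices. Then for each $i$, the outer pair of $e_i$ is exposed in $\{e_1,\dots,e_i\}$, because any \emph{other} edge containing that pair must use one of its two vertices as a middle vertex and therefore has a strictly larger extreme difference, hence appears later in the ordering. With that modification the rest of your write-up is fine: the edge count, the degree and codegree bounds, part (2), and part (3) (the observation that every $3$-AP in $[n]$ with common difference at least $1$ is an edge of $\cA$, so Roth's theorem applied to $\phi^{-1}([\delta n])$ produces an edge mapped into the empty triple system $\binom{[\delta n]}{3}$) all match the paper's argument.
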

\begin{proof}
    Firstly, the definition of $E(\cA)$ directly shows that all vertex degrees are of order $n$ and all codegrees are constant. It suffices to show that $\c A$ is collapsible and that it has $\binom{n-1}{2}$ edges. For the latter, note that for each pair $i,j \in [n]$ with $j-i \geq 2$, there is precisely one edge $\{i,\lfloor \frac{i+j}{2} \rfloor,j\}$ in $E(\cA)$. Thus, the number of edges in $\cA$ is the same as the number of pairs $i,j \in [n]$ with $j-i \geq 2$, of which there are 
    $$ \binom{n}{2} - (n-1) = \frac{(n-2)(n-1)}{2} = \binom{n-1}{2},$$
    since there are $n-1$ pairs $i,j \in [n]$ with $j-i=1$.

    To show that $\cA$ is collapsible, it is enough to exhibit an ordering of the edges $e_1, \dots, e_m$ with $m = \binom{n-1}{2}$, such that for each $i \in [m]$, some pair of vertices in the edge $e_i$ is exposed in the hypergraph $\{e_1, \dots, e_i\}$. The ordering $e_1, \dots, e_m$ is obtained by sorting the edges increasingly by the difference between their two most extreme vertices, breaking ties arbitrarily. That is, if $e=\{i, \lfloor \frac{i+j}{2} \rfloor,j\}$ and $f = \{k, \lfloor \frac{k+\ell}{2} \rfloor, \ell\}$, then if $|j-i| < |\ell - k|$, $e$ would be before $f$ in the order $e_1, \dots, e_m$. Now for each $i\in[m]$, the edge $e_i = \{k, \lfloor \frac{k+\ell}{2} \rfloor, \ell\}$ contains the pair $\{k,\ell\}$ which is exposed in $\{e_1, \dots, e_i\}$. This is because every edge $e_h = \{k, \ell, j\}$ different from $e_i$ must satisfy either $\ell = \lfloor \frac{k+j}{2} \rfloor$ or $k = \lfloor \frac{\ell+j}{2} \rfloor$. In the former case we have that $|k-j| > |\ell - k|$, in the latter case --- that $|\ell-j| > |\ell - k|$, so in both cases $e_h$ must come after $e_i$ in our order, that is, $h > i$. Thus, the pair $\{k, \ell\}$ does not appear in any of $e_1, \dots, e_{i-1}$, so it is exposed in $\{e_1, \dots, e_i\}$.
  
    Now we turn to $\cH$. Vertices $v\in V$ such that $v>\delta n$ have full degree. Vertices $v\le \delta n$ have degree at least $\binom{n-1}{2}-\binom{\delta n-1}{2}=(1-\delta^2 - o(1))\binom{n}{2}$. Similarly, pairs of vertices with at least one vertex outside of $\binom{[\delta n]}{2}$ have full codegree, and pairs of vertices in $\binom{[\delta n]}{2}$ have codegree at least $n-\delta n$.
    
    Finally, suppose for contradiction that $\c A\subseteq \c H$. Take $n$ to be large enough so that by the $k=3$ case of Szemer\'edi's theorem~\cite{szemeredi1975sets} (known also as Roth's theorem~\cite{roth1953certain}) every subset of $[n]$ of size at least $\delta n$ contains a $3$-term arithmetic progression. Then, whatever the embedding of $\c A$ into $\c H$ is, there will be an edge of $\c A$ entirely contained within $[\delta n]$, which is empty in $\c H$, leading to a contradiction.
\end{proof}

\subsection{The hypergraph regularity method}
In this subsection, we introduce the weak regularity lemma for hypergraphs, a generalization of Szemer\'edi's regularity lemma~\cite{szemeredi1975regular}.
Given $\eps > 0$, we say that a tripartite $3$-graph $G$ on sets $V_1, V_2, V_3$ is
\emph{$\eps$-regular} if for every $X_i \subseteq V_i$ of size at least $\eps|V_i|$, we
have
\[
  \left|d_G(X_1,X_2,X_3) - d_G(V_1,V_2,V_3)\right| \le \eps.
\]

A partition $\{V_0,...,V_t\}$
of the vertex set of an $n$-vertex $3$-graph $G$ is \emph{$\eps$-regular} if $|V_0| \le \varepsilon n$, all other sets $V_i$ with $i\in[t]$ have equal size, and the graph induced by all but at most
$\eps\binom{t}{3}$ triples $V_i,V_j,V_k$ with $i<j<k$ is $\eps$-regular. 

\begin{theorem}[Hypergraph regularity lemma~\cite{chung1991regularity,frankl1992uniformity}]
\label{thm:hypergraph-regularity}
  For every $\eps > 0$ and $t_0\in \N$, there exist $T_0,n_0\in \N$ such that every
  $3$-graph $G$ with at least $n_0$ vertices admits an $\eps$-regular
  partition $\{V_0,V_1,...,V_t\}$, where $t_0 \le t \le T_0$.
\end{theorem}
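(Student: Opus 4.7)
My plan is to follow Szemer\'edi's classical energy-increment strategy, adapted to the weak (vertex-triple) regularity notion for $3$-graphs.

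First, I would attach an \emph{index} (mean-square density) to any vertex partition $\cP = (V_0, V_1, \ldots, V_t)$ of $V(G)$:
$$\operatorname{ind}(\cP) \;=\; \frac{1}{n^3} \sum_{1 \le i < j < k \le t} |V_i||V_j||V_k| \cdot d_G(V_i, V_j, V_k)^2.$$
Since $d_G\in[0,1]$ and $\sum_{i<j<k}|V_i||V_j||V_k| \le n^3/6$, this quantity is bounded above by an absolute constant. An application of the Cauchy--Schwarz inequality to the $d_G$ values across a refinement of a triple of parts shows that refining any $V_i$ into sub-parts can never decrease $\operatorname{ind}(\cP)$.

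Next, I would prove a \emph{defect inequality}: if the triple $(V_i, V_j, V_k)$ fails to be $\eps$-regular, witnessed by subsets $X_i\subseteq V_i, X_j\subseteq V_j, X_k\subseteq V_k$ of relative size at least $\eps$, then refining each $V_a$ into $X_a$ and $V_a\setminus X_a$ increases the contribution of this triple to $\operatorname{ind}$ by at least $c\eps^{C}|V_i||V_j||V_k|/n^3$ for absolute constants $c, C>0$. This is the three-dimensional analogue of the defect Cauchy--Schwarz inequality used in the graph case and follows by a direct computation starting from $|d_G(X_i,X_j,X_k) - d_G(V_i,V_j,V_k)| > \eps$.

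Third, I would run the energy-increment loop. Begin with any equitable partition of $V(G)$ into $t_0$ parts, with leftover vertices in $V_0$. If it is $\eps$-regular, stop. Otherwise, for each of the at least $\eps\binom{t}{3}$ irregular triples choose witness subsets and refine each $V_i$ into the common refinement of all witness sets it contributes to. The index then increases by at least $c\eps^{C+1}$, while the number of non-exceptional parts grows by a factor of at most $2^{t^2}$. A standard clean-up step (truncating each refined part to a common size and dumping the remainders into $V_0$) restores equal part sizes while keeping $|V_0|\le\eps n$ and changing the index by only $o(1)$ as $n\to\infty$. Since the index is bounded by a constant, the loop halts after $O(\eps^{-C-1})$ rounds, yielding a tower-type bound $T_0 = T_0(\eps, t_0)$ on the final number of parts and a threshold $n_0$ above which the clean-up error is negligible.

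The main obstacle is the second step: establishing a polynomial-in-$\eps$ energy gain from a single witnessed irregular triple. Once that defect inequality is in place, the outer iteration, the equalisation, and the choice of $T_0$ and $n_0$ are routine and essentially mirror the proof of the $2$-uniform regularity lemma.
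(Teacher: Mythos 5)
This statement is quoted in the paper as a known result (the weak/vertex regularity lemma for $3$-graphs, cited to Chung and to Frankl--R\"odl); the paper gives no proof of its own. Your proposal is precisely the standard energy-increment argument used in those references, and it is correct in outline: the index is bounded, refinement is monotone by Cauchy--Schwarz, a witnessed irregular triple yields a defect gain of order $\eps^{5}|V_i||V_j||V_k|/n^3$ (your constants $c,C$), and the iteration terminates in $O(\eps^{-C-1})$ rounds with a tower-type bound on $T_0$. The only place to be slightly more careful than your write-up suggests is the clean-up step: the index loss from equalising part sizes and discarding vertices into $V_0$ is not merely ``$o(1)$'' for free --- you must choose the chopping granularity and $n_0$ (in terms of the a priori bound $T_0$ on the number of parts over all rounds) so that this loss is at most a fixed fraction, say half, of the guaranteed per-round gain $c\eps^{C+1}$, and so that $|V_0|\le\eps n$ is maintained across all $O(\eps^{-C-1})$ rounds. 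This is routine bookkeeping identical to the $2$-uniform case, so I see no genuine gap.
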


Given an $\eps$-regular partition $\cQ = \{V_0,...,V_t\}$ of the vertex set of a $3$-graph $G$,
we define the \emph{reduced graph} $\cR = \cR(\cQ, \eps, \alpha)$ on
vertex set $\{1,\dotsc,t\}$ corresponding to the sets $\{V_0, V_1,...,V_t\}$. The edges of $\cR$ are
all $\eps$-regular triples of density at least $\alpha$. A special case of the following by now standard degree inheritance lemma was first shown in~\cite[Proposition 15]{buss2013minimum}, and later generalized in~\cite[Corollary 2.5]{han2015minimum}.
\begin{lemma}[Hypergraph regularity lemma --- degree version]
\label{lemma:degree-inheritance}
For every $0 < \eps < \alpha < \delta$ and $t_0 \in \mathbb{N}$, there exist $T_0, n_0\in \N$ such that the following holds. Suppose that $G$ is a $3$-graph on $n \ge n_0$ vertices with minimum degree $\delta_1(H) \geq \delta \binom{n}{2}$. Then $G$ admits an $\eps$-regular partition $\cQ = \{V_0,...,V_t\}$ such that $t_0 < t < T_0$ and the reduced graph $\cR = \cR (\cQ, \eps, \alpha)$ has minimum degree $\delta_1(\cR) \geq (\delta - \eps - \alpha)\binom{t}{2}$.
\end{lemma}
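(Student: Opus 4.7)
The plan is to apply Theorem~\ref{thm:hypergraph-regularity} with regularity parameter $\eps'\ll\eps,\alpha$ and lower bound $t_0'\ge 2t_0$ to obtain an $\eps'$-regular partition $\cQ'=(V_0',V_1',\ldots,V_{t'}')$ of $V(G)$ with common cluster size $m=(n-|V_0'|)/t'$, and then to \emph{clean up} $\cQ'$ by absorbing into the exceptional part any cluster that participates in too many irregular triples. The cleanup is what converts the \emph{global} irregularity bound supplied by Theorem~\ref{thm:hypergraph-regularity} into the \emph{per-cluster} degree condition we need.

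For the key estimate, fix $i\in[t']$ and double-count $\sum_{v\in V_i'}\deg_G(v)\ge m\delta\binom{n}{2}$. Decomposing this sum according to the pair of clusters containing the other two endpoints of each edge, the contribution from edges with two or more vertices in $V_i'$, or with any vertex in $V_0'$, is $O(nm^2)+O(\eps'n^2m)=o(m\binom{n}{2})$ once $t'$ is large, so
\[\sum_{\substack{\{j,k\}\subseteq[t']\setminus\{i\}\\ j\neq k}} e(V_i',V_j',V_k')\;\ge\; m(\delta-o(1))\binom{n}{2}.\]
Split the sum on the left according to whether $\{i,j,k\}$ is an edge of $\cR$ (contribution at most $m^3$ per term, occurring at most $\deg_\cR(i)$ times), whether $(V_i',V_j',V_k')$ is $\eps'$-regular of density less than $\alpha$ (contribution less than $\alpha m^3$ per term), or whether $(V_i',V_j',V_k')$ is $\eps'$-irregular (contribution at most $m^3$ per term, occurring $|\mathrm{irreg}(i)|$ times, where $\mathrm{irreg}(i)$ is the set of pairs $\{j,k\}$ making $(V_i',V_j',V_k')$ $\eps'$-irregular). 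Dividing through by $m^3$ and using $\binom{n}{2}/m^2\ge\binom{t'}{2}$ yields the key inequality
\[\deg_\cR(i)\;\ge\;(\delta-\alpha-o(1))\binom{t'}{2}\,-\,|\mathrm{irreg}(i)|.\]

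The main obstacle is that a priori $|\mathrm{irreg}(i)|$ could be as large as $\binom{t'-1}{2}$ for some $i$, even though the $\eps'$-regularity of $\cQ'$ only gives the \emph{global} bound $\sum_i|\mathrm{irreg}(i)|\le 3\eps'\binom{t'}{3}$. By Markov's inequality, the set $B:=\{i\in[t']:|\mathrm{irreg}(i)|>\sqrt{\eps'}\binom{t'}{2}\}$ has size at most $\sqrt{\eps'}\,t'$; I would absorb every $V_i'$ with $i\in B$ into the exceptional part to obtain a refined partition $\cQ=(V_0,V_1,\ldots,V_t)$ with $t=t'-|B|$. Choosing $\eps'$ sufficiently small compared to $\eps$ ensures $|V_0|\le\eps'n+\sqrt{\eps'}n\le\eps n$, $t\ge t'/2\ge t_0$, and that $\cQ$ is $\eps$-regular in the sense of the paper (since $\eps'$-regular triples are automatically $\eps$-regular, and the number of $\eps$-irregular triples in $\cQ$ is bounded by $\eps'\binom{t'}{3}\le\eps\binom{t}{3}$ using $t\ge t'/2$). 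For every $i$ in the new reduced graph, $|\mathrm{irreg}(i)|\le\sqrt{\eps'}\binom{t'}{2}\le(\eps/2)\binom{t}{2}$, so the key inequality together with $\binom{t'}{2}\ge\binom{t}{2}$ gives $\deg_\cR(i)\ge(\delta-\alpha-\eps)\binom{t}{2}$, as required. Taking $T_0$ to be the upper bound on $t'$ produced by Theorem~\ref{thm:hypergraph-regularity} applied with parameters $\eps'$ and $t_0'$, and $n_0$ large enough that all $o(1)$ terms are negligible, completes the construction.
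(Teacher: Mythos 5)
The paper does not actually prove Lemma~\ref{lemma:degree-inheritance}; it is quoted from the literature (a special case is \cite[Proposition 15]{buss2013minimum}, generalized in \cite[Corollary 2.5]{han2015minimum}). Your argument is essentially the standard proof of that cited result: apply the weak regularity lemma with a much finer parameter $\eps'$, double-count the degree of a cluster to get $\deg_{\cR'}(i)\ge(\delta-\alpha-o(1))\binom{t'}{2}-|\mathrm{irreg}(i)|$, and then discard the few clusters incident to too many irregular triples. The skeleton is sound and all the quantitative steps I checked go through.

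Two small points should be patched. First, and more substantively: your ``key inequality'' bounds the degree of $i$ in the reduced graph of the \emph{original} partition on $t'$ clusters, but the conclusion concerns the reduced graph of the \emph{final} partition, in which the clusters of $B$ have been deleted. Pairs $\{j,k\}$ with $j\in B$ or $k\in B$ no longer contribute, so a surviving cluster can lose up to $|B|\,t'\le\sqrt{\eps'}\,t'^2=O(\sqrt{\eps'})\binom{t}{2}$ from its degree. This loss is harmless (it is absorbed into the $\eps$ slack exactly like your $|\mathrm{irreg}(i)|$ term), but as written the final display silently identifies $\deg_{\cR}(i)$ with $\deg_{\cR'}(i)$ and the step is missing. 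Second, a cosmetic omission in the double count: besides edges meeting $V_i'$ twice or meeting $V_0'$, you must also discard edges with one vertex in $V_i'$ and two vertices in a single other cluster $V_j'$; their total contribution is at most $t'\binom{m}{2}m=O(nm^2)$, so the same error term covers it, but the case should be named. With these two remarks added, the proof is complete and matches the standard argument behind the cited lemma.
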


Given disjoint vertex sets $X,Y$ in a $3$-graph, we say that a vertex $v\notin X \cup Y$ \emph{$d$-expands into $\{X,Y\}$} if $\deg(v;X,Y)\ge d|X||Y|$. Embedding trees in regular triples, we will aim to choose edges consisting of expanding vertices. The following lemma will be a key tool allowing us to maintain this property throughout the embedding.

\begin{lemma}\label{lem:typical}
Let $0<1/n_0 \ll \varepsilon \ll d$ and let $t\in\mathbb{N}$.
Let $G$ be a $3$-graph with vertex set $V_1 \cup \dots \cup V_t \cup \{x\}$, where the indices of $V_i$ behave cyclically and $\forall i \neq j, V_i \cap V_j = \varnothing$, but possibly $x \in V_i$ for some $i\in[t]$. Suppose each $|V_i|\geq n_0$ and that for all $i\in[t]$ the tripartite graph $G[V_i,V_{i+1},V_{i+2}]$ is $\eps$-regular of density at least $d$. Let $X_i, Y_i, Z_i\subseteq V_i$ for $i\in[t]$ (not necessarily distinct) each have size at least $\sqrt{\eps} |V_i|$, and suppose that $x$ is a vertex which $d/8$-expands into $\{X_k,X_\ell\}$, where $k,\ell\in [t], k \neq \ell$. Then there exist vertices $y\in X_k,z\in X_\ell$, such that $xyz$ is an edge, $y$ is $d/4$-expanding into $\{Y_{k-2},Y_{k-1}\}$,$\{Y_{k-1},Y_{k+1}\}$ and $\{Y_{k+1},Y_{k+2}\}$, and $z$ is $d/4$-expanding into $\{Z_{\ell-2},Z_{\ell-1}\}$,$\{Z_{\ell-1},Z_{\ell+1}\}$ and $\{Z_{\ell+1},Z_{\ell+2}\}$.
\end{lemma}
\begin{proof}
The vertex $x$ is $d/8$-expanding into $\{X_k,X_\ell\}$, so by averaging, $d|X_k|/16$ vertices $y\in X_k$ satisfy $\deg(x,y;X_\ell)\ge d|X_\ell|/16$. Indeed, denote by $A\subseteq X_k$ the set of such ``expanding'' vertices $y\in X_k$. If $|A|<d|X_k|/16$ we have 
\begin{align*}\deg(x;X_k,X_\ell)&= \sum_{y\in A}\underbrace{\deg(x,y;X_\ell)}_{\le |X_\ell|}+ \sum_{y\in X_k\setminus A}\underbrace{\deg(x,y;X_\ell)}_{\le d|X_\ell|/16}\\
&< (d|X_k|/16)\times |X_\ell|+|X_k|\times (d|X_\ell|/16)=d|X_k||X_\ell|/8,\end{align*}
a contradiction.

Since $|A|=d|X_k|/16\ge\eps|V_k|$, by regularity $d(Y_{k-2},Y_{k-1},A)\ge d/2$ and by an analogous averaging at least $d|A|/4$ of vertices $a\in A$ satisfy $\deg(a;Y_{k-2},Y_{k-1})\ge \frac{d}{4}|Y_{k-2}||Y_{k-1}|$, i.e. they are $d/4$-expanding into $\{Y_{k-2},Y_{k-1}\}$. Denote the set of such vertices by $A'$. Similarly, a $d/4$-fraction of the vertices in $A'$ are $d/4$-expanding into $\{Y_{k-1},Y_{k+1}\}$ and a $d/4$-fraction of those vertices are $d/4$-expanding into $\{Y_{k+1},Y_{k+2}\}$. This yields a set of at least $d^3|A|/64>0$ vertices in $A$ which are $d/4$-expanding into all three triples needed for $y$. Pick $y$ to be one of these vertices.

By construction, we have $\deg(x,y;X_\ell)\ge d|X_\ell|/16 \ge \eps|V_i|$. We can now repeat the above procedure to obtain a set of at least $d^4|X_\ell|/1024>0$ vertices in the co-neighbourhood of $x$ and $y$ which are $d/4$-expanding into all three triples needed for $z$. Take $z$ to be one of these vertices.
\end{proof}

\section{Covering almost all vertices}\label{sec:almost}

In this section we show that Theorem~\ref{thm:main} holds for almost-spanning trees. The main result of this section is Lemma~\ref{lemma:almost_embedding}, in which we embed an almost-spanning tree using a weak regularity partition of the host graph. We start with a helper lemma which tells us which cluster of the partition each vertex of the tree will be embedded in. This lemma already prescribes the global strategy of our embedding: we break our tree into pieces, assign each piece to an edge in the reduced graph, and then use the edges of the tight Hamilton cycle in the reduced graph to connect every pair of pieces that are adjacent (see Figure~\ref{fig:almostcover}). For technical reasons (since we want the freedom to later embed the root of our almost spanning tree wherever we want), we already work with up to $2\Delta$ disjoint subtrees in this helper lemma, which are then broken into smaller subpieces.

\begin{lemma}
\label{lemma:vertex_assignment}
Let $1/n' \ll 1/t \ll \zeta \ll \nu \ll 1/\Delta$ and suppose $t\equiv 1 \pmod 3$.
Let $T^1, \dots T^s$ be disjoint loose trees of maximum vertex degree $\Delta$ rooted at $r_1, \dots, r_s$, where $1 \le s \le 2 \Delta$ and $\sum_{i\in[s]} v(T^i) \le (1-\nu)n'$. Let $a':\{r_1,...,r_s\}\to [t-2]$ be an assignment of the roots to distinct integers in $[t-2]$. Then $a'$ can be extended to a full assignment $a:\bigcup_{i\in[s]}V(T^i)\to[t]$ such that 
\begin{enumerate}[label=(\roman*)]
    \item \label{cond:room-left-each-cluster} $|a^{-1}(j)|\le (1-\zeta)n'/t$ for all $j\in[t]$, and
    \item \label{cond:embedding-in-edge} for all $e\in \bigcup_{i\in[s]}E(T^i)$ we have $a(e)=\{j,(j\mod t) +1,(j+1\mod t)+1\}$ for some $j\in [t]$.
\end{enumerate}
\end{lemma}
\begin{proof}
Choose $\beta$ satisfying  $1/n'\ll \beta \ll 1/t$.
For each $i\in[s]$ and $v \in V(T^i)$, denote by $T^i(v)$ the subtree of $T^i$ rooted at $v$.

Firstly, we decompose each tree $T^i$ into a number of subtrees $T^i_1, \dots, T^i_{k_i}$ in the following way. Start with $T' := T^i$. Find a vertex $v \in T'$ such that the subtree $T^i(v)$ has at least $\beta n'$ vertices, but for each $u$ among $v$'s children, the subtree $T^i(u)$ has fewer than $\beta n'$ vertices. Such a vertex can be found by starting from the root and iteratively moving to the child of the current vertex that has the largest subtree, until reaching a vertex for which the condition holds. Set $T^i_1 := T^i(v)$ and modify $T'$ by removing $V(T^i(v))\setminus\{v\}$ (that is, replacing $T^i(v)$ with a leaf). Continue iteratively, constructing $T^i_2, T^i_3, \dots$ in the same way, until at some point the tree $T'$ has at most $2\Delta \beta n'$ vertices, then add $T'$ as the last tree $T^i_{k_i}$ in the decomposition, noting that $r_i\in V(T^i_{k_i})$. All these subtrees have size at most $2 \Delta \beta n'$ and all but perhaps $T^i_{k_i}$ have size at least $\beta n'$. Letting $k := \sum_{i=1}^s k_i$, we rename all $k$ trees we have obtained from our decomposition as $T_1, \dots, T_k$ in the following specific order. For $i\in[s],$ we let $T_i := T^i_{k_i}$ and for every $j\in [k_i - 1]$, we set $h := s + \sum_{i'=1}^{i-1} (k_{i'}-1) + (k_i-j)$ and let $T_h := T^i_j$. That is, the first $s$ trees in our order are $T^1_{k_1}, \dots, T^s_{k_s}$, and after that we iterate for $i$ from $1$ to $s$ and place all the subtrees of $T^i$ in reverse order (i.e. $T^i_{k_i-1}, \dots, T^i_1$). Crucially, for each $T_i$ with $i>s$ in this order, the root of $T_i$ is a leaf in some $T_j$ with $j < i$. Note that $k\le 1/\beta + s \leq 1/\beta + 2\Delta$. 

For each $z\in \{1,\dots,\lfloor t/3\rfloor\}$, let $E_z = \{3z-2, 3z-1, 3z\}$ and denote by $M = \{E_1, \dots, E_{\lfloor t/3\rfloor}\}$ the matching formed by the disjoint consecutive triples $E_z$. Note that only the index $t$ does not belong to any edge in this matching.

For each $T_i$, pick an arbitrary layering $L_1, L_2, \dots$ and colour its vertices in 3 colours $1,2,3$ according to the layering, that is, colour layer $L_i$ in colour $(i-1) \mod 3 + 1$. This yields a proper colouring of $T_i$, that is, each edge of $T_i$ contains precisely one vertex of each colour. Let $C_1(T_i), C_2(T_i), C_3(T_i)$ be the colour classes that correspond to each colour.

We will extend $a'$ to the vertices of each of $T_1, T_2, \dots$ in order, and for each $T_i$ most of the vertices of $T_i$ will be assigned to the three indices in a single edge $E_z\in M$. We will use the consecutive triples of $[t]$ (taken cyclically) to `travel' from the edge each subtree is assigned to the edge its child subtree is assigned to.

At first, we set $a := a'$ and we will gradually extend it.
For each $h\in[t]$, we define  $\ell(h):=n'/t-|a^{-1}(h)|$. We refer to this as the \emph{leftover} of $h$ and note that to satisfy~\ref{cond:room-left-each-cluster}, at the end of the embedding we must have $\ell(h)\ge \zeta n'/t$ for all $h$. Whenever we extend the assignment by setting $a(v)=h$, we decrease $\ell(h)$ by $1$. Since $\sum_{i\in[s]}v(T^i)\le (1-\nu)n'$, at any stage of the embedding all leftovers must sum to at least $\nu n'$.

Throughout, we maintain the following \emph{balance invariant}: for each $z\in\{1,\dots, \lfloor t/3\rfloor\}$, after assigning trees $T_1, \dots, T_i$ to clusters, the differences between $\ell(3z-2), \ell(3z-1)$ and $\ell(3z)$ are all at most $6\Delta \beta n' + i\times 6t(2\Delta)^{6t}$.

For each $j\in[s]$, assign the colour classes of $T^j_{k_j}$ to $a'(r_j)$ and the other two elements in $E_{\lceil a'(r_j)/3\rceil}$ in such a way that~\ref{cond:embedding-in-edge} is satisfied and $r_j$ is assigned to $a'(r_j)$, as required. Note that since each $T_i$ has size at most $2\Delta\beta n'$, and all $a'(r_j)$ are distinct, this satisfies the balance invariant in a strong sense --- that is, after assigning all trees $T_{k_j}^j$ with $j\in[s]$, we have that for every $z$ the differences between $\ell(3z-2), \ell(3z-1)$ and $\ell(3z)$ are all at most $6\Delta \beta n'$.

Suppose we have assigned trees $T_1, T_2, \dots, T_{i-1}$ and we now consider the unassigned $T_i$. We first find an edge $E_z$ such that $\min\{\ell(3z-2),\ell(3z-1), \ell(3z)\}\ge 2 \zeta n'/t$. Such an edge exists, for otherwise the total leftover is less than
$$ \ell(t) + \lfloor t/3\rfloor(6 \zeta n'/t + 12 \Delta \beta n'+2k\times 6t(2\Delta)^{6t}) \le \nu n'.$$

We will assign most of $T_i$ to $E_z$. Let $\pi$ be the permutation of $[3]$ such that $|C_{\pi(1)}(T_i)| \le |C_{\pi(2)}(T_i)| \le |C_{\pi(3)}(T_i)|$. Let $E_z = \{w_1, w_2, w_3\}$ with $w_1 = 3z-2, w_2 =3z-1, w_3 = 3z$, and let $\rho$ be the permutation such that $\ell(w_{\rho(1)}) \le \ell(w_{\rho(2)}) \le \ell(w_{\rho(3)})$ . Then for a balanced assignment we aim to assign most of $C_{\eta(h)}(T_i)$ to $w_{h}$, for $h \in [3]$, where $\eta=\pi\circ\rho^{-1}$.

Let $r$ be the root of $T_i$, and note that $a(r)=j$ is already defined by our choice of the order $T_1, \dots, T_k$. Let $L_1, L_2,...$ be the layering of $T_i$ that we chose. If $j = w_h$ with $\eta(h)=1$, then $r$ is where it should be, so we simply assign all of $C_{\eta(h)}(T_i)$ to $w_h$, for each $h \in [3]$.

Otherwise, we start assigning $T_i$ to integers in $[t]$ layer by layer, traversing the cyclic ordering of $[t]$ possibly several times, in order to get to $E_z$ and assign most of each color class to its intended index. We already know that $L_1=\{r\}$ is assigned to $j$. Suppose layer $L_b$ was assigned to $h\in [t]$. We consider two cases. If $(\eta(1), \eta(2), \eta(3)) \in \{ (1,2,3), (2,3,1), (3,1,2)\}$, then we assign layer $L_{b+1}$ to $(h \mod t) + 1$. We stop as soon as we assign some layer $L_{b'}\subseteq C_{\eta(g)}(T_i)$ to $w_g$. Note that since $\eta$ is cyclic, and by the sequential nature of the embedding, we must have $g=1$. This means $L_{b'}$ has now been assigned to its intended index. Note that this will happen for some $b' \le 6t$ because for each $h\in[t]$, if $L_w$ is some layer assigned to $h$, then the next two layers assigned to $h$ will be $L_{w+t}$ and $L_{w+2t}$ and since $t$ is not divisible by $3$, these three layers $L_w, L_{w+t}, L_{w+2t}$ will have different colours. We complete the assignment of $T_i$ by assigning each subsequent layer $L_{b'+j}$ to $w_{(j \mod 3)+1}$.

Otherwise, we have $(\eta(1), \eta(2), \eta(3)) \in \{ (1,3,2), (2,1,3), (3,2,1)\}$ and we assign layer $L_{b+1}$ to $(h - 2\mod t)+1$ instead, that is, we traverse $[t]$ in the opposite direction. Again, we stop when we assign some layer $L_{b'}\subseteq C_{\eta(g)}(T_i)$ to $w_g$, and this happens for $b'\le 6t$. Analogously to the previous case, we have $g=3$. We complete the assignment of $T_i$ by assigning all subsequent layers $L_{b'+j}$ to $w_{3-(j\mod 3)}$.

In both cases, we assign all edges of $T_i$ to triples of consecutive elements in the natural cyclic ordering of $[t]$. Note that only constantly many vertices of $T_i$ are assigned to indices outside of $E_z$ and all other vertices are assigned to $E_z$. This is because $T_i$ has maximum degree at most $\Delta$, so the number of vertices at layers $1, \dots, 6t$ is at most $6t(2\Delta)^{6t}$.

From the balance invariant, before defining the assignment for $T_i$, we had $0\le\ell(w_{\rho(3)})-\ell(w_{\rho(1)})\le 6\Delta \beta n'+(i-1)\times 6t(2\Delta)^{6t}$. After assigning the vertices of $T_i$ we have that the new leftovers still satisfy the balance invariant since
\begin{align*}-2\Delta \beta n' - 6t(2\Delta)^{6t} &\le\ell_{new}(w_{\rho(3)})-\ell_{new}(w_{\rho(1)})\\
&\le\ell(w_{\rho(3)})-\ell(w_{\rho(1)})+\underbrace{|C_{\pi(1)}(T_i)|-|C_{\pi(3)}(T_i)|}_{\le 0}+6t(2\Delta)^{6t}\\
&\le 6\Delta\beta n' + i\times 6t(2\Delta)^{6t},
\end{align*}
and similarly for $\{w_{\rho(1)}, w_{\rho(2)}\}$ and $\{w_{\rho(2)}, w_{\rho(3)}\}$. For any $E_{z'}$ with $z' \neq z$, the difference between the leftovers of any two elements of $E_{z'}$ increases by at most $6t(2\Delta)^{6t}$.

We thus assign the vertices of all trees $T_1, T_2, \dots T_k$ to $[t]$ in a way that satisfies (ii). When assigning each $T_i$, we subtracted at most $|T_i|\le 2\Delta \beta n'$ from each leftover in $E_z$, chosen to be at least $2\zeta n'/t$. We also removed at most $6t(2\Delta)^{6t}$ from leftovers in $[t]\setminus E_z$. Thus at the end of the assignment $\ell(h) \ge 2\zeta n'/t-2\Delta\beta n' - k \times 6t(2\Delta)^{6t} \ge \zeta n'/t$ for all $h\in[t]$, showing that~\ref{cond:room-left-each-cluster} is satisfied.
\end{proof}

\begin{figure}
    \begin{center}
    \includegraphics[scale=0.5]{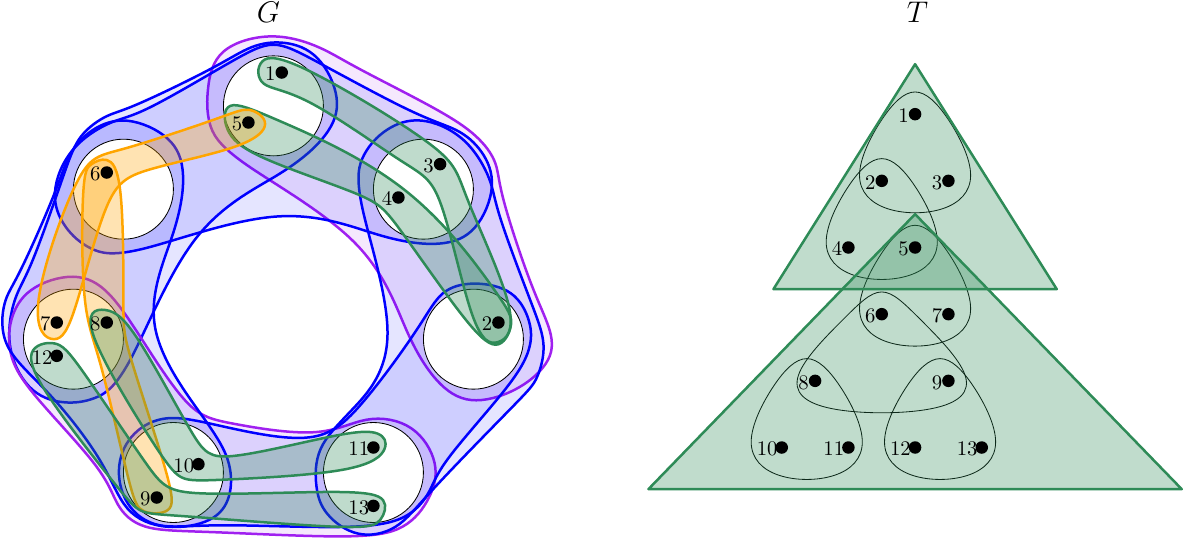}
    \end{center}
    \caption{The tree $T$ is embedded piece by piece (pieces shown in green), using the tight Hamilton cycle in the reduced graph to connect the pieces. The matching edges of the reduced graph in $G$ are highlighted in purple. The images of edges of $T$ that are used for ``connecting" the two parts of the tree along the tight Hamilton cycle are shown in orange. The images of the other edges of $T$, i.e., those that are embedded in matching edges, are shown in green. }
    \label{fig:almostcover}
\end{figure}

We now state and prove the main lemma of this section, showing that every almost-spanning tree can be embedded into our host graph, even with a prescribed embedding of its root.
\begin{lemma}(An almost embedding)
\label{lemma:almost_embedding}
Let $1/n \ll \nu \ll \gamma, 1/\Delta$, and let $G$ be a $3$-graph on $n$ vertices with $\delta_1(G) \geq (\frac{5}{9} + \gamma)\binom{n}{2}$. Let $T$ be a loose tree of maximum degree $\Delta$ on $(1-\nu) n$ vertices. Then for every $r \in V(T)$ and $x \in V(G)$, there exists an embedding of $T$ into $G$ that maps $r$ to $x$.
\end{lemma}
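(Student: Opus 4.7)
The plan is to apply the hypergraph regularity method together with Theorem~\ref{thm:degree-condition-tight-Hamiltonicity} to obtain a tight Hamilton cycle $\cC$ in the reduced graph, then use Lemma~\ref{lemma:vertex_assignment} to pre-assign the non-root vertices of $T$ to clusters so that every edge of $T\setminus\{r\}$ maps to an edge of $\cC$, and finally extend the prescribed embedding $r\mapsto x$ greedily using Lemma~\ref{lem:typical}. Fix constants $1/n \ll 1/T_0 \ll \eps \ll \alpha \ll \zeta \ll \nu \ll \gamma, 1/\Delta$ with $t_0$ large. Lemma~\ref{lemma:degree-inheritance} supplies an $\eps$-regular partition $\c Q=(V_i)_{i\in\{0,\dots,t\}}$ whose reduced graph $\cR=\cR(\c Q,\eps,\alpha)$ has minimum vertex degree at least $(5/9+\gamma/2)\binom{t}{2}$; by absorbing one cluster into $V_0$ if necessary I would arrange $3\nmid t$. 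Theorem~\ref{thm:degree-condition-tight-Hamiltonicity} then yields a tight Hamilton cycle $\cC$ in $\cR$; relabel so $V_1,\dots,V_t$ appear in cyclic order along $\cC$.

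Next I would handle the root by hand. Let $d:=d_T(r)\le\Delta$ and enumerate the edges of $T$ through $r$ as $\{r,u_i,v_i\}$ for $i\in[d]$; the subtrees $T^{2i-1},T^{2i}$ rooted at $u_i,v_i$ partition $V(T)\setminus\{r\}$ into $2d\le 2\Delta$ pieces of total size at most $(1-\nu)n$. Since the link of $x$ in $G$ has at least $(5/9+\gamma)\binom{n}{2}$ edges, a double-counting argument gives a positive-density family of ordered cluster pairs $(V_k,V_\ell)$ admitting $\Omega(|V_k||V_\ell|)$ edges through $x$; intersecting with the (asymptotically full) family of pairs in which most vertices are $\alpha/4$-expanding into the three pairs of clusters completing them to an edge of $\cC$, I am still left with many good pairs. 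Then I would pick $d$ such pairs on $2d$ distinct clusters from $\{V_1,\dots,V_{t-2}\}$ (feasible since $t\gg\Delta$), call them $\{W_{2i-1},W_{2i}\}_{i\in[d]}$, and inside each pair select $y_i\in W_{2i-1}$, $z_i\in W_{2i}$ with $\{x,y_i,z_i\}\in E(G)$ and both $y_i,z_i$ themselves $\alpha/4$-expanding.

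I would then apply Lemma~\ref{lemma:vertex_assignment} to the $s=2d$ subtrees $T^1,\dots,T^{2d}$ with $a'(u_i)$, $a'(v_i)$ set to the indices of $W_{2i-1}$, $W_{2i}$, obtaining an assignment $a:V(T)\setminus\{r\}\to[t]$ that maps every edge of $T-r$ to an edge of $\cC$ and leaves at least $\zeta|V_i|$ unassigned vertices in each cluster. Defining $\phi(r)=x$, $\phi(u_i)=y_i$, $\phi(v_i)=z_i$, I would complete $\phi$ greedily in BFS order from $u_1,v_1,\dots,u_d,v_d$: at each step, a new edge $\{p,w,w'\}$ of $T$ has $p$ already embedded and $\{V_{a(p)},V_{a(w)},V_{a(w')}\}$ is an edge of $\cC$, so Lemma~\ref{lem:typical}, applied to the 7-partite graph formed by these three clusters together with their two $\cC$-neighbours on each side (taking the $X,Y,Z$ sets to be the still-unused $\alpha/4$-expanding vertices of each cluster), returns $\phi(w)\in V_{a(w)}$, $\phi(w')\in V_{a(w')}$ such that $\{\phi(p),\phi(w),\phi(w')\}\in E(G)$ and both new images are $\alpha/4$-expanding, preserving the invariant. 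Since each cluster has $\zeta|V_i|$ leftover and regularity forces a $(1-O(\eps))$-fraction of its vertices to be $\alpha/4$-expanding, the $X,Y,Z$ sets remain comfortably above the $\sqrt{\eps}|V_i|$ threshold required by Lemma~\ref{lem:typical}.

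The hard part will be the very first layer of the greedy embedding: the prescribed image $x=\phi(r)$ has no a priori expansion property with respect to $\cC$, so Lemma~\ref{lem:typical} cannot be invoked directly to produce the children of $r$. I plan to bypass this by exploiting the $(5/9+\gamma)\binom{n}{2}$ minimum-degree bound on $x$ in the raw graph to hand-pick $\alpha/4$-expanding witnesses $y_i,z_i$ for the children of $r$; from the second layer onwards the greedy embedding proceeds inductively from Lemma~\ref{lem:typical}.
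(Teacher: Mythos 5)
Your overall architecture (weak regularity, a tight Hamilton cycle in the reduced graph via Theorem~\ref{thm:degree-condition-tight-Hamiltonicity}, the cluster assignment of Lemma~\ref{lemma:vertex_assignment}, manual treatment of the root's edges using the degree of $x$, and a greedy extension via Lemma~\ref{lem:typical}) is exactly the paper's, and your root-handling matches the Claim inside the paper's proof. However, there is a genuine gap in the greedy extension step: you never ensure that the \emph{already-embedded} endpoint $\phi(p)$ of the next edge still expands into the \emph{currently available} vertices of $V_{a(w)}$ and $V_{a(w')}$. Lemma~\ref{lem:typical} requires the vertex $x$ of its statement to $d/8$-expand into the current sets $X_k,X_\ell$, whereas your invariant only guarantees that $\phi(p)$ was expanding at the moment $p$ was embedded. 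In a global BFS order, $\Theta(n)$ vertices of $T$ may be embedded into $V_{a(w)}$ between the embedding of $p$ and that of its children, and these could exhaust precisely the co-neighbourhood of $\phi(p)$ in $V_{a(w)}\times V_{a(w')}$. The fact that $\zeta|V_i|$ vertices remain unused in each cluster does not rescue this: expansion into a large set does not imply expansion into an arbitrary surviving $\zeta$-fraction of it, and regularity gives no information about the pair degree of a single fixed vertex into small subsets.

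The paper closes this gap with two devices absent from your plan. First, $T$ is cut into pieces of size $O(\Delta\beta n)$ with $\beta\ll\zeta t^{-1}$, and the edges are ordered so that any two edges sharing a non-special vertex are within $2\Delta\beta n$ of each other in the embedding order; hence between embedding a non-special $p$ and its children only $O(\Delta\beta n)\ll\gamma\zeta|V_i|$ vertices are removed, and the expansion constant degrades only from $\gamma/8$ to $\gamma/16$. Second, for the roots of the pieces (the ``special'' vertices), whose children may be embedded arbitrarily much later, the children are placed into reservoir sets $R_i$ of size $2\sqrt{\eps}|V_i|$ reserved in advance; only $O(\Delta/\beta)=o(|R_i|)$ vertices are ever drawn from the reservoirs over the whole process, so expansion into them survives until needed. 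Without some mechanism of this kind your inductive invariant is not actually preserved, and this is the central technical difficulty of the lemma rather than a routine detail.
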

\begin{proof}
Choose $t_0, \eps$ and $\zeta$ such that $1/n \ll 1/t_0 \ll \varepsilon \ll \zeta \ll \nu$.
By Lemma~\ref{lemma:degree-inheritance}, there exists an $\eps$-regular partition $\cQ = \{V_0,...,V_t\}$ with $t_0 < t < T_0$ such that the reduced graph $\cR = \cR(\cQ, \eps, \gamma/2)$ has minimum degree $\delta_1(\cR) \geq (\frac{5}{9} + \gamma/3)\binom{t}{2}$. If $t\not \equiv 1 \pmod 3$ we set $V_0 := V_0 \cup V_t$ or $V_0:=V_0\cup V_t\cup V_{t-1}$ and decrease $t$ by $1$ or $2$ to ensure $t\equiv 1 \pmod 3$. The modified reduced graph $\cR$ still has minimum degree $\delta_1(\cR) \geq (\frac{5}{9} + \gamma/4)\binom{t}{2}$ since $t_0$ is large. By Theorem~\ref{thm:degree-condition-tight-Hamiltonicity}, the reduced graph $\cR$ contains a tight Hamilton cycle $\cC$. Recall that every edge of $\cC$ corresponds to an $\eps$-regular triple with density at least $\gamma/2$. Rename the vertices of $\cR$ such that $V_1, \dots, V_t$ gives the correct order for the tight Hamilton cycle.

We build an embedding $\phi:T \rightarrow G$ vertex by vertex, starting by defining $\phi(r) = x$. For each $i\in[t]$, let $R_i\subseteq V_i$ be a set of size $2\sqrt{\eps} |V_i|$ (the sets $R_i$ are reservoirs of vertices). At each step of the embedding, we will keep track of the sets of available \emph{reservoir} vertices $V^r_i:=R_i\setminus \im(\phi)$ and available \emph{main} vertices $V^m_i:=(V_i\setminus R_i)\setminus \im(\phi)$. Note that these sets get updated as we extend $\phi$, but we won't describe this explicitly to avoid cumbersome notation.

We would like to embed the rest of $T$ using vertices in $V_1, \dots, V_t$ which expand into the regular triples of $\cC$, but the fixed vertex $x$ could fall in $V_0$, or it could have low degree in the triples of $\cC$ containing it. To get around this, we will embed the first layer of $T$ manually. Let $\{r,r_1,r_2\},\{r,r_3,r_4\},\dots, \{r,r_{s-1}, r_s\}$ be the edges in $T$ that contain $r$, and let $T_1, \dots, T_s$ be the respective subtrees in $T$ rooted at $r_1, \dots, r_s$. Note that $s \leq 2\Delta$, since $r$ has degree at most $\Delta$.

\begin{claim}
There are distinct sets $V_{i_1}, \dots, V_{i_s}$ with $1 \le i_1 < \dots i_s \le t-2$ such that for all $j \in [s/2]$, we have that $\deg_G(x; V_{i_{2j-1}}, V_{i_{2j}}) \geq (5/9 + \gamma/2) |V_{i_{2j-1}}| |V_{i_{2j}}|$.
\end{claim}
\begin{proof}[Proof of claim.]
Take a maximum such set of distinct sets $V_{i_1}, \dots, V_{i_{s'}}$, and suppose that $s' < s$. This implies that the degree of $x$ in $G$ is at most
$$ |V_0 \cup V_{t-1} \cup V_t| n + |\bigcup_{j \in [s']} V_{i_j}|n + \sum_{i \in [t]} \binom{|V_i|}{2} + \sum_{i, j \in [t] \setminus \{i_1, \dots, i_{s'}\}, i< j} (5/9 + \gamma/2) |V_i| |V_j| $$
$$ \le \nu n^2 + \frac{2\Delta }{t} n^2 + t \binom{\frac{n}{t}}{2} + \binom{t}{2} (5/9 + \gamma/2) \frac{n^2}{t^2} < (5/9 + 3\gamma/4) \binom{n}{2},$$
contradicting the minimum degree condition $\delta_1(G) \geq (5/9 + \gamma) \binom{n}{2}$. 
\end{proof}
This in turn implies that for every $j\in[s/2]$, we have $$\deg_G(x; V^m_{i_{2j-1}}, V^m_{i_{2j}}) \geq \deg_G(x; V_{i_{2j-1}}, V_{i_{2j}}) - 4\sqrt{\eps}|V_{i_{2j-1}}||V_{i_{2j}}| \geq (5/9 + \gamma/4) |V^m_{i_{2j-1}}| |V^m_{i_{2j}}|.$$

For each $j \in [s/2]$, we set $\phi(r_{2j}) = v_{2j}$ and $\phi(r_{2j-1}) = v_{2j-1}$ where $\{x, v_{2j-1}, v_{2j}\}\in E(G)$ and for $h\in\{2j-1,2j\}$, vertex $v_h\in V^m_{i_h}$ is a $\gamma/8$-expanding vertex into $\{V^r_{i_h-2},V^r_{i_h-1}\}$, $\{V^r_{i_h-1},V^r_{i_h+1}\}$, $\{V^r_{i_h+1}, V^r_{i_h+2}\}$. This is possible by Lemma~\ref{lem:typical} since, as we just showed, $x$ has good degree to all pairs $V^m_{i_{2j-1}}, V^m_{i_{2j}}$.

Next, we apply Lemma~\ref{lemma:vertex_assignment} to assign vertices to sets $V_1, \dots, V_t$, where they will later be embedded. We do so with parameters $n'=n-|V_0|\ge(1-\eps)n$, $t:=t$, $\zeta:=\zeta$, $\nu:=\nu/2$, $\Delta:=\Delta$, trees $T_1, \dots, T_s$ rooted at $r_1, \dots, r_s$, and $a'(r_j)=i_j$ for $j\in[s]$ as specified above. We have that $\sum_{i\in[s]}v(T_i)\le (1-\nu)n\le (1-\nu/2)(1-\eps)n\le (1-\nu/2)n'$ and recall that $s \leq 2\Delta$, so all assumptions of Lemma~\ref{lemma:vertex_assignment} apply. Let $a: \bigcup_{i\in[s]}V(T_i) \to [t]$ be the resulting assignment of the vertices. With this assignment in mind, we aim to embed each $v$ that is not already in $\im \phi$ to $V_{a(v)}$. Towards this aim, we refer to $v$ as ``assigned to $V_{a(v)}$''. Note that for any edge $\{u,v,w\} \in E(T)$, apart from possibly the ones that $x$ is in, by property~\ref{cond:embedding-in-edge} of Lemma~\ref{lemma:vertex_assignment} we have that $V_{a(u)}, V_{a(v)}, V_{a(w)}$ is an edge in the tight Hamilton cycle $\cC$ in $\cR$. By property~\ref{cond:room-left-each-cluster}, for each $j \in [t]$ the number of vertices assigned to $V_j$ is at most $(1-\zeta)|V_j|$.

We start by splitting each tree $T_j$ into pieces of size between $\beta n$ and $2\Delta \beta n$ for $\beta\ll 1/t$ as we did in the beginning of the proof of Lemma~\ref{lemma:vertex_assignment}. Define a set of \emph{special vertices} consisting of the roots of all pieces. Further, take a valid ordering of the edges of each piece, and concatenate them in a breadth-first manner to obtain a valid ordering of each edge set $E(T_j)$. Concatenate all orderings of $E(T_j)$ in any order. The resulting ordering of edges has the important property that any two edges which intersect in a non-special vertex are at a distance at most $2\Delta \beta n$ in the ordering.

We proceed to define the rest of the embedding $\phi$ one edge at a time, according to the edge ordering given above. The children of special vertices will be embedded in the reservoir sets $V^r_\star$, and all other vertices in their complements $V^m_\star$ within the clusters of the regular partition. To keep track of this, for a vertex $v\in V(T)$, we define $\sigma(v)=r$ if $v$ is special and $m$ otherwise and $\tau(v)=r$ if $v$ is the child of a special vertex and $m$ otherwise. Intuitively, $\tau(v)$ says whether $v$ will be embedded in a reservoir, and $\sigma(v)$ says whether the children of $v$ will be embedded in a reservoir. Note that at most $1/\beta + 2\Delta$ vertices are mapped to $r$ by $\sigma$ and at most $2\Delta/\beta + 4\Delta^2$ vertices are mapped to $r$ by $\tau$. Throughout the embedding, we will maintain the following \emph{expansion property}: each vertex $v$ is mapped to a $\gamma/8$-expanding vertex into $\{V^{\sigma(v)}_{a(v)-2},V^{\sigma(v)}_{a(v)-1}\}$, $\{V^{\sigma(v)}_{a(v)-1}, V^{\sigma(v)}_{a(v)+1}\}$,$\{ V^{\sigma(v)}_{a(v)+1},V^{\sigma(v)}_{a(v)+2}\}$. Note that in our manual embedding we made sure that this property holds for $r_1,...,r_s$.

Suppose that we are looking to embed some edge $\{y, z, q\} \in E(T)$ such that $i=a(y), j = a(z), h=a(q)$ and $y' := \phi(y) \in V_i$ is already defined. Note that $i,j,h$ in some order are consecutive integers, where we identify $t$ with $0$ and $t-1$ with $-1$.

By the expansion property, at the time of embedding $y$, its image $y'$ was $\gamma/8$-expanding into $\{V^{\sigma(y)}_{i-2},V^{\sigma(y)}_{i-1}\}$, $\{V^{\sigma(y)}_{i-1}, V^{\sigma(y)}_{i+1}\}$,$\{ V^{\sigma(y)}_{i+1},V^{\sigma(y)}_{i+2}\}$. If $y$ is special, i.e. if $\sigma(y)=r$, since embedding $y$, we removed at most $3\Delta/\beta < \frac{\gamma}{64}|V^r_{\star}|$ vertices from $V^r_{i-2},...,V^r_{i+2}$. If $y$ is not special, i.e. if $\sigma(y)=m$, since embedding $y$, we removed at most $4\Delta \beta n< \frac{\gamma}{64} \times  \frac{\zeta}{2} |V_\star|<\frac{\gamma}{64}|V_\star^m|$ vertices from $V^m_{i-2},...,V^m_{i+2}$. In both cases, $y'$ remains $\gamma/16$-expanding. Now, by Lemma~\ref{lem:typical} (setting $d:=\gamma/2$) applied to the sets $V_1, \dots, V_t$ with $k =j, \ell = h$ and $x:=y'$, the $X$-sets being $V^{\sigma(y)}_g$, the $Y$-sets being $V^{\sigma(z)}_g$ and the $Z$-sets being $V^{\sigma(q)}_g$ for each $g\in [t]$, we obtain $z'\in V^{\sigma(y)}_j$ (noting that $\sigma(y)=\tau(z)$ because $z$ is a child of $y$) and $q'\in V^{\sigma(y)}_h$ (noting that $\sigma(y)=\tau(q)$ because $q$ is a child of $y$) which satisfy the expansion property and $\{y',z',q'\}\in E(G)$. Set $\phi(z)=z'$ and $\phi(q)=q'$ to complete the embedding of $\{y,z,q\}$.
\end{proof}

\section{Completing the embedding}\label{sec:absorb}

To finish the embedding, we use an absorption technique inspired by the one developed in~\cite{bottcher2020embedding} and used also in~\cite{bottcher2019universality,pavez2024dirac}. We start with some definitions.

\begin{definition}
A $d$-\emph{star} $S_v = \big( v, (u^i_1, u^i_2)_{i \in [d]} \big)$ in a $3$-uniform hypergraph consists of $d$ edges $\{v, u^i_1, u^i_2\}_{i \in [d]}$, where $v$ and $u^i_1, u^i_2$ for $i \in [d]$ are distinct. We refer to $v$ as the \emph{center} of $S_v$.
\end{definition}

\begin{definition}
Let $H$ be a $3$-graph.
For a triple $(w_1, w_2, w_3)$, a $d$-\emph{absorbing tuple} consists of $2$ vertex-disjoint $d$-stars $(S_{v_2}, S_{v_3})$ such that $\{w_1, v_2, v_3\} \in E(H)$ and for $i \in \{2,3\}$ with $S_{v_i} = \big(v_i, (u^{i,j}_1, u^{i,j}_2)_{j \in [d]} \big)$, we have that $S_{w_i} = \big(w_i, (u^{i,j}_1, u^{i,j}_2)_{j \in [d]} \big)$ is also a $d$-star in $H$. We denote by $A_d(w_1, w_2, w_3)$ the set of all $d$-absorbing tuples for $(w_1, w_2, w_3)$, and by $A_d(H)$ the union of all $A_d(w_1, w_2, w_3)$ for all triples $(w_1, w_2, w_3)\in V(H)^3$.

Let $T$ be a loose tree and $\phi: V(T) \rightarrow V(H)$ be an embedding of $T$ in $H$. We say that a star $S_v$ in $H$ is \emph{inhabited} by $\phi$ if there is $x \in V(T)$ such that $\phi(x) = v$ and each edge $e \in E(T)$ with $x \in e$ is mapped to a distinct edge of $S_v$. Note that some vertices and edges of $S_v$ may not belong to the image of $\phi$ and can thus be freshly used in extensions of $\phi$. We say that an absorbing tuple $(S_{v_2}, S_{v_3})$ is \emph{inhabited} by $\phi$ if both $S_{v_2}$ and $S_{v_3}$ are inhabited by $\phi$. These definitions are illustrated in Figure~\ref{fig:absorbing-tuple}.
\end{definition}

\begin{figure}
    \centering
\includegraphics{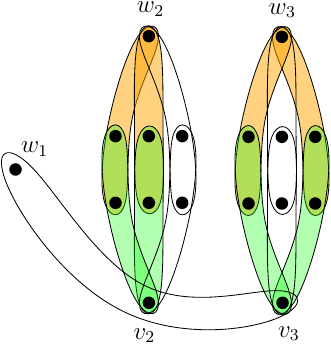}
    \caption{A $3$-absorbing tuple for $(w_1,w_2,w_3)$ inhabited by an embedding $\phi$. Images of edges under $\phi$ are shown in green. The crucial property of this structure is that the two green stars in $\phi$ can be switched for the two orange stars plus an extra edge at $w_1$, thus extending the embedding.}
    \label{fig:absorbing-tuple}
\end{figure}

The following lemma allows us to complete an embedding of almost all of $T$ into $H$, provided that enough appropriate absorbing tuples exist.
\begin{lemma}[Absorbing Lemma]
\label{lemma:absorbing_lemma}
Let $1/n \ll \delta \ll \alpha < 1/\Delta$ with $n$ odd.
Let $T$ be a $3$-tree on $n$ vertices of maximum degree $\Delta$ with a valid ordering of the edges $e_1, \dots, e_{(n-1)/2}$ and let $T_0 = \{e_1, \dots, e_{(n'-1)/2}\}$ be a subtree of $T$ on $n' \geq (1-\delta)n$ vertices, for $n'$ odd. Let $H$ be a $3$-graph on $n$ vertices, and $\phi_0$ be an embedding $\phi_0: V(T_0) \rightarrow V(H)$. Suppose $\cA \subseteq A_{\Delta}(H)$ is a family of pairwise vertex-disjoint $\Delta$-absorbing tuples in $H$ such that
\begin{enumerate}
    \item every tuple in $\cA$ is inhabited by $\phi_0$ and
    \item $|A_{\Delta}(w_1,w_2,w_3) \cap \cA| \geq \alpha n$ for every triple $(w_1, w_2, w_3)$ of distinct vertices of $H$.
\end{enumerate}
Then there exists an embedding of $T$ in $H$.
\end{lemma}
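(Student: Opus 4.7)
The plan is to embed the remaining edges of $T\setminus T_0$, namely $e_{(n'-1)/2+1},\dots,e_{(n-1)/2}$, in the given valid order, using one fresh absorbing tuple from $\cA$ per step. Each such $e_i=\{x,y,z\}$ has one shared vertex $x$ (already embedded by induction) and two new vertices $y,z$. At step $i$ we will first choose distinct unused vertices $w_2,w_3\in V(H)\setminus \im(\phi)$ (at least $2$ always exist, since $|\im(\phi)|\le n-2$ at the start of each step) and set $w_1:=\phi(x)$; then we will pick an absorbing tuple $A=(S_{v_2},S_{v_3})\in A_\Delta(w_1,w_2,w_3)\cap \cA$ and perform a \emph{swap}: letting $p,p'\in V(T_0)$ be the vertices with $\phi_0(p)=v_2$ and $\phi_0(p')=v_3$, we reassign $\phi(p):=w_2$, $\phi(p'):=w_3$, $\phi(y):=v_2$, $\phi(z):=v_3$. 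The new edge $\{x,y,z\}$ then embeds as $\{w_1,v_2,v_3\}\in E(H)$ by the definition of absorbing tuple.

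The choice of $A$ requires some care. Let $D\subseteq V(T_0)$ be the set of \emph{anchor} vertices, that is, $T_0$-vertices incident to at least one edge of $T\setminus T_0$; since each such edge has at most one endpoint in $T_0$ (its shared vertex, if it lies there), we have $|D|\le (n-n')/2\le \delta n/2$. We will require $A$ to be previously unused and to satisfy $\phi_0^{-1}(v_2),\phi_0^{-1}(v_3)\notin D$. Since the tuples in $\cA$ are pairwise vertex-disjoint, each vertex of $H$ is a center of at most one tuple in $\cA$, so at most $2|D|\le \delta n$ tuples are excluded by the second requirement, while at most $(n-n')/2\le \delta n/2$ tuples have already been used. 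Assumption 2 gives $|A_\Delta(w_1,w_2,w_3)\cap \cA|\ge \alpha n$, so at least $\alpha n-3\delta n/2>0$ valid tuples remain, using $\delta\ll \alpha$.

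The main obstacle, and the reason for the conditions above, is to verify that the swap preserves the partial embedding. Injectivity is immediate. For each $T_0$-edge $\{p,a,b\}$ containing $p$, the covering of $A$ by $\phi_0$ ensures $\{\phi_0(a),\phi_0(b)\}$ equals some petal pair of $S_{v_2}$; the key observation is that petal vertices of any tuple in $\cA$ are never centers of any tuple (again by pairwise vertex-disjointness), so $a$ and $b$ are themselves never swapped in any step, and hence $\phi(a)=\phi_0(a)$ and $\phi(b)=\phi_0(b)$ throughout the entire process. Consequently $\{p,a,b\}$ now embeds as $\{w_2,\phi_0(a),\phi_0(b)\}$, which is an edge of $S_{w_2}$ and hence of $H$, since $S_{w_2}$ has the same petal pairs as $S_{v_2}$. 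A symmetric argument applies at $p'$. Finally, since $p,p'\notin D$, they have no incident edges in $T\setminus T_0$ at all, so no earlier absorption step could have placed an edge at $p$ or $p'$ outside the stars $S_{v_2},S_{v_3}$ that this swap might invalidate. Iterating over all $(n-n')/2$ remaining edges thus produces a complete embedding of $T$ into $H$.
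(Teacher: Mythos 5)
Your proposal is correct and follows essentially the same absorption strategy as the paper: embed the leftover edges one at a time, each time swapping the preimages of the two star centres of a fresh absorbing tuple out to two uncovered vertices of $H$ and mapping the two new tree vertices onto those centres. Your additional precaution of avoiding tuples whose centre-preimages lie in the anchor set $D$ is a sensible (indeed, necessary) refinement that the paper's proof handles only implicitly via its invariant that unused tuples remain covered; your counting shows it costs only $O(\delta n)$ of the $\alpha n$ available tuples, so the argument goes through.
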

\begin{proof}
Let $V(H) \setminus V(\phi_0(T_0)) = \{x_1, x_2, \dots, x_{n-n'}\}$.
For each even $i\in [n-n']$, let $T_i = \{e_1, \dots, e_{(n'+i-1)/2}\}$ and we define an embedding $\phi_i : T_i \rightarrow H$ and a subset $\cA_i \subset \cA$ such that 
\begin{enumerate}[label=(\alph*)]
    \item $V(\phi_i(T_i)) = V(\phi_0(T_0)) \cup \{x_1, \dots, x_i\}$, \label{cond_emb_1}
    \item $|\cA_i| \le 2i$, \label{cond_emb_2}
    \item every tuple in $\cA \setminus \cA_i$ is inhabited by $\phi_i$. \label{cond_emb_3}
\end{enumerate}
The final embedding of $T$ in $H$ will then be given by $\phi_{n-n'}$.

These conditions clearly hold for $\phi_0$. Suppose we have defined some $\phi_i$ and $\cA_i$ for some even $i \le n-n'-2$ such that the conditions above hold. Let $e_{(n'+i+1)/2} = \{z_1, z_2, z_3\} \in E(T)$ for some $z_1 \in V(T_i)$ and $z_2,z_3 \notin V(T_i)$ be the next edge we are to embed. Set $w_1 := \phi_i(z_1)$ and $w_2 := x_{i+1}, w_3 := x_{i+2}$. Note that since $|A_{\Delta}(w_1, w_2, w_3) \cap (\cA \setminus \cA_i)| \geq \alpha n - 2i \geq (\alpha-2\delta)n > 0$, there exists a $\Delta$-absorbing tuple $(S_{v_2}, S_{v_3}) \in \cA \setminus \cA_i$ for $(w_1, w_2, w_3)$. For $j\in\{2,3\}$, let $S_{v_j} = \big( v_j, (u^{j,h}_1, u^{j,h}_2)_{h \in[\Delta]} \big)$. Let $\cA_{i+2} := \cA_i \cup \{(S_{v_2}, S_{v_3}), (S_{v_3},S_{v_2})\}$ and define $\phi_{i+2}$ as follows:
$$\phi_{i+2}(v) = \begin{cases}
  w_j  & \mbox{if }v = \phi^{-1}_i(v_j), j \in \{2,3\} \\
  v_j & \mbox{if }v = z_j, j \in \{2,3\} \\
  \phi_i(v) & \mbox{if }v \in V(T_i) \setminus \{\phi_i^{-1}(v_2), \phi_i^{-1}(v_3)\}.
\end{cases}$$
Note that $\phi_{i+2}$ is injective and conditions \ref{cond_emb_1}, \ref{cond_emb_2}, and \ref{cond_emb_3} hold by construction.

It remains to check that $\phi_{i+2}$ is indeed an embedding. The only edge in $E(T_{i+2}) \setminus E(T_{i})$ is $e_{(n'+i+1)/2} = \{z_1, z_2, z_3\}$, which maps to $\{w_1, v_2, v_3\} \in E(H)$ since $(S_{v_2}, S_{v_3})$ is a $\Delta$-absorbing tuple for $(w_1,w_2,w_3)$. For each edge $e$ of $T_i$ that does not contain $\phi_i^{-1}(v_2)$ or $\phi_i^{-1}(v_3)$, we have $\phi_i(e) = \phi_{i+2}(e)$. Note that no edge of $T_i$ can contain both $\phi_i^{-1}(v_2)$ and $\phi_i^{-1}(v_3)$, since $(S_{v_2}, S_{v_3})$ is inhabited by $\phi_i$. Now suppose $e \in E(T_i)$ is such that $\phi_i^{-1}(v_j) \in e$, for $j \in \{2,3\}$. Observe that $\big( w_j, (u^{j,h}_1, u^{j,h}_2)_{h \in [\Delta]} \big)$ is a star in $H$ because $(S_{v_2}, S_{v_3})$ is a $\Delta$-absorbing tuple for $(w_1, w_2, w_3)$. Since $(S_{v_2}, S_{v_3})$ is inhabited by $\phi_i$, we have that $\phi_i(e) \in E(S_{v_j})$ and so $\phi_{i+2}(e) = (\phi_i(e) \setminus \{v_j\}) \cup w_j$ is also an edge in $H$.
\end{proof}

The next lemma is used to find a family $\cA$ of $\Delta$-absorbing tuples as required by the absorbing lemma.
\begin{lemma}
\label{lemma:absorbing_set}
Let $1/n \ll \alpha \ll \beta \ll \gamma, 1/\Delta$.
Let $H$ be a $3$-graph on $n$ vertices with minimum degree $(1/2 + \gamma)\binom{n}{2}$. Then there exists a set $\cA\subseteq A_\Delta(H)$ of at most $\beta n$ pairwise vertex-disjoint $\Delta$-absorbing tuples $\{(S_{v^i_2},S_{v^i_3})\}$ such that for every triple $(w_1, w_2, w_3)\in V(H)^3$ we have $|A_{\Delta}(w_1, w_2, w_3) \cap \cA| \geq \alpha n$. 
\end{lemma}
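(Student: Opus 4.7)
The plan is to use the standard probabilistic absorbing construction: first show that for every triple $(w_1,w_2,w_3)$ of distinct vertices the set $A_\Delta(w_1,w_2,w_3)$ has size $\Omega(n^{4\Delta+2})$, then obtain a random subset $\cA^* \subseteq A_\Delta$ by including each tuple independently with a suitable probability, and finally clean up by deleting one tuple from every pair that shares a vertex.

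For the counting step, fix a triple $(w_1,w_2,w_3)$. I would first pick an ordered pair $(v_2,v_3)$ of vertices outside $\{w_1,w_2,w_3\}$ with $\{w_1,v_2,v_3\} \in E(H)$; the minimum degree condition gives at least $(1+2\gamma)\binom{n}{2} - O(n)$ such ordered pairs. Then for each $i \in \{2,3\}$ I need $\Delta$ vertex-disjoint pairs $\{u_1^{i,h},u_2^{i,h}\}$ in the common link graph $L_i$ of $v_i$ and $w_i$, that is, pairs $\{a,b\}$ with $\{v_i,a,b\},\{w_i,a,b\} \in E(H)$. Each individual link graph has at least $(1/2+\gamma)\binom{n}{2}$ edges, so by inclusion--exclusion $|E(L_i)| \geq 2\gamma\binom{n}{2} - O(n)$. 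Choosing $\Delta$ disjoint edges greedily while forbidding the $O(\Delta)$ already-used vertices discards only $O(\Delta n)$ edges per step, leaving at least $(\gamma n^2/8)^{\Delta}$ ordered sequences. Multiplying these counts yields $|A_\Delta(w_1,w_2,w_3)| \geq \eta n^{4\Delta+2}$ for some constant $\eta = \eta(\gamma,\Delta) > 0$.

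Next I would set $p := 3\alpha / (\eta n^{4\Delta+1})$ and let $\cA^*$ contain each tuple in $A_\Delta$ independently with probability $p$. Since $|A_\Delta| \le n^{4\Delta+2}$, the expected size of $\cA^*$ is at most $3\alpha n / \eta$, which by the hierarchy $\alpha \ll \beta$ is at most $\beta n / 4$. Moreover, for every triple $(w_1,w_2,w_3)$ the expected size of $A_\Delta(w_1,w_2,w_3) \cap \cA^*$ is at least $3\alpha n$. Since these are sums of independent indicator variables, Chernoff's inequality combined with a union bound over the at most $n^3$ triples shows that, with positive probability, $|\cA^*| \le \beta n / 2$ and $|A_\Delta(w_1,w_2,w_3) \cap \cA^*| \geq 2\alpha n$ for every triple simultaneously.

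Finally, each tuple has $4\Delta+2$ vertices and each vertex lies in at most $O(n^{4\Delta+1})$ tuples in $A_\Delta$, so the expected number of vertex-intersecting pairs of tuples in $\cA^*$ is at most $p^2 \cdot |A_\Delta| \cdot (4\Delta+2) \cdot O(n^{4\Delta+1}) = O(\alpha^2 n)$. By Markov's inequality this quantity is at most $\alpha n$ with positive probability, so all three events hold simultaneously for some outcome. Deleting one tuple from each intersecting pair removes at most $\alpha n$ tuples, producing the desired pairwise vertex-disjoint family $\cA$ with $|A_\Delta(w_1,w_2,w_3) \cap \cA| \geq 2\alpha n - \alpha n = \alpha n$ for every triple. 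The main obstacle is the $\Omega(n^{4\Delta+2})$ lower bound in the counting step, which relies crucially on the $(1/2+\gamma)$ minimum degree assumption via inclusion--exclusion on the link graphs; once that estimate is in hand the rest of the argument is the routine random deletion.
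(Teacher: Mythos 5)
Your proposal is correct and follows essentially the same route as the paper's proof: the same inclusion--exclusion count on the common links of $v_i$ and $w_i$ giving $\Omega(n^{4\Delta+2})$ absorbing tuples, followed by independent random selection, Chernoff plus a union bound over triples, and a Markov/deletion step for the vertex-intersecting pairs. The only cosmetic difference is that you tie the sampling probability directly to $\alpha$, whereas the paper introduces an intermediate constant $c$ with $\alpha \ll c \ll \beta$; both choices work.
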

\begin{proof}
We first show that for any triple $(w_1, w_2, w_3)$ in $H$, there are many $\Delta$-absorbing tuples. Fix $(w_1, w_2, w_3)$. For each of the at least $(1/2 + \gamma/2)\binom{n}{2}$ many edges $\{w_1, v_2, v_3\}$ disjoint from $\{w_2,w_3\}$, we have that for each $j\in \{2,3\}$ the common neighbourhood of $w_j$ and $v_j$ has size at least $2\gamma \binom{n}{2}$. Thus, for each $v_j$ there are at least $\left( 2\gamma \binom{n}{2} -6\Delta n \right)^\Delta/\Delta!$ many choices for the star $S_{v_j}$, even if the other star has been fixed. In total, we get at least $(1/2 + \gamma/2) \binom{n}{2} \left( \gamma \binom{n}{2}\right)^{2\Delta}/(\Delta!)^2  \geq  \beta n^{4\Delta + 2}$ many $\Delta$-absorbing tuples for $(w_1,w_2,w_3)$.

Pick a subset $\cA'$ of $A_{\Delta}(H)$ by choosing each of its members independently at random with probability $p = \frac{c}{n^{4\Delta+1}}$, for some $\alpha \ll c \ll \beta$. Note that $|A_{\Delta}(H)| \le n^{4\Delta + 2}$, so by a Chernoff bound, we have that w.h.p. $|\cA'| \le 2c n \le \beta n$. By a Chernoff bound and a union bound, w.h.p. for each $(w_1, w_2, w_3)$ we have that $|A_\Delta(w_1, w_2, w_3)\cap \cA'| \geq \beta c n/2$. Let $X$ count the number of pairs of tuples in $\cA'$ which overlap in at least one vertex. Then
$$ \mathbb{E}[X] \le 2^{10\Delta} (5\Delta)! n^{8\Delta + 3} p^2 \le 2^{10\Delta} (5\Delta)! c^2 n,$$
and so by Markov's inequality, with probability at least $1/2$, we have $X \le 2^{10\Delta+1} (5\Delta)! c^2 n$. Fix an outcome of $\cA'$ for which all these probabilistic events hold. Removing all pairs of overlapping tuples in $\cA'$, we get a set $\cA$ of at most $\beta n$ vertex-disjoint tuples such that $|A_\Delta(w_1, w_2, w_3)\cap \cA'| \geq \beta c n/2 - 2^{10\Delta+1} (5\Delta)!c^2 n \geq \alpha n$ for each $(w_1,w_2,w_3)$, as desired.
\end{proof}

To satisfy the conditions of the absorbing lemma, we also need to ensure that the embedding of almost all of $T$ inhabits every tuple in $\cA$. The next lemma takes care of that.
\begin{lemma}(Inhabiting Lemma)
\label{lemma:inhabiting_lemma}
 Let $1/n \ll \beta \ll \nu \ll \gamma, 1/\Delta$.
 Let $H$ be a $3$-graph on $n$ vertices with minimum degree $(1/2 + \gamma)\binom{n}{2}$ and let $T$ be a loose tree on $\nu n$ vertices with maximum degree $\Delta$. Let $\cA \subset A_{\Delta}(H)$ be a set of at most $\beta n$ pairwise vertex-disjoint $\Delta$-absorbing tuples of $H$. Then there is an embedding $\phi:V(T) \rightarrow V(H)$ such that every $\Delta$-absorbing tuple in $\cA$ is inhabited by $\phi$.
\end{lemma}
\begin{proof}
Let $m:=2|\cA|\le 2\beta n$ and let $S_1, S_2, \dots, S_m$ be the vertex-disjoint stars from $\cA$. We consecutively define subtrees $T_1, \dots, T_m$ of $T$ and embeddings $\phi_1, \dots, \phi_m$ such that $\phi_i : T_i \rightarrow H$ is an embedding of $T_i$ in $H$ and $S_1, \dots, S_i$ are inhabited by $\phi_i$ for each $i \in [m]$. In addition, we do this in such a way that $v(T_i) \le (2\Delta+4) i$. Define $T_1$ to be an arbitrary vertex $r \in V(T)$ along with all edges incident to $r$. Thus, $T_1$ is a star of degree at most $\Delta$. Set $\phi_1(r) = v$ where $v$ is the center of $S_1$, and for each $e\in E(T_1)$, set $\phi_1(e)$ to be a distinct edge of $S_1$. Note that $S_1$ is inhabited by $\phi_1$ and $v(T_1) \le 2\Delta + 1$.

Suppose $T_1, \dots, T_i$ and $\phi_1, \dots, \phi_i$ have been defined for some $i \in [m-1]$. Note that there must be some vertex $x \in V(T) \setminus V(T_i)$ such that the shortest loose path from $x$ to $T_i$ has length $3$ (in terms of edges), since otherwise all vertices in $T$ are reachable by a path of length $2$ from $T_i$, implying that
$$v(T) \le v(T_i)(1 + 2\Delta + 4\Delta^2) \le 8\Delta^2(2\Delta +4)i \le 8\Delta^2(2\Delta +4)2\beta n \ll \nu n = v(T),$$
leading to a contradiction. Let $T_{i+1}$ be the union of $T_i$, the shortest loose path $P$ from $T_i$ to $x$, and all edges incident to $x$ in $T$. Thus,
$$v(T_{i+1}) \le v(T_i) + v(P\setminus T_i) + (\Delta -1)2 \le (2\Delta + 4)i + 6 + 2\Delta - 2\le(2\Delta + 4)(i+1).$$
We now define $\phi_{i+1}$ by setting $\phi_{i+1}(v) = \phi_i(v)$ for each $v \in V(T_i)$, and extending it to $T_{i+1}$ as follows. Let $w$ be the center of $S_{i+1}$ and $\{w,u_1,u_2\}$ be one of the edges of $S_{i+1}$. Suppose $P = (v_1, v_2, v_3, v_4, v_5, v_6, v_7 = x)$, where $v_1 \in V(T_i)$. Set $\phi_{i+1}(x) = w$ and $\phi_{i+1}(v_5) = u_1$ and $\phi_{i+1}(v_6) = u_2$. Let $\phi_{i+1}$ map each edge $e \in E(T_{i+1}) \setminus \{v_5, v_6, x\}$ with $x \in e$ to a distinct edge in $E(S_{i+1}) \setminus \{w, u_1, u_2\}$. Finally, we find a loose path of length $2$ in $H \setminus (\phi_i(T_i) \cup V(S_{i+1}))$ between $\phi_{i+1}(v_1)$ and $\phi_{i+1}(v_5) = u_1$, and we let $\phi_{i+1}$ map $(v_1, v_2, v_3, v_4, v_5)$ to that path, thereby finishing the definition of $\phi_{i+1}$. It remains to show that such a path exists.

Let $y:= \phi_{i+1}(v_1)$ and $z:= \phi_{i+1}(v_5)$, and set $H':= H \setminus (\phi_i(T_i) \cup V(S_{i+1})) \cup \{y,z\}$. We would like to show that there are edges $e, f \in E(H')$ with $y \in e \setminus f$, and $z \in f \setminus e$ and such that $|e \cap f| = 1$. Note that $$\delta_1(H') \geq \delta_1(H) - |\phi_i(T_i) \cup V(S_{i+1})|n \geq (1/2 + \gamma) \binom{n}{2} - \nu n^2 - (2\Delta+1)n \geq (1/2 + \gamma/2) \binom{n}{2}.$$ Then there are at least $\gamma/2\binom{n}{2}$ pairs of vertices in $H'$ that form an edge with both $y$ and $z$. By averaging, there is a vertex $b\in H'$ which is in at least $\gamma n/4 \ge 2$ of these pairs, say $\{a,b\}$ and $\{b,c\}$. Then setting $e=\{y,a,b\},f=\{z,c,b\}$ gives the desired path. 

Having finished the definition of $\phi_{i+1}$, note that $\phi_{i+1}$ is an embedding of $T_{i+1}$ into $H$, and $S_1, \dots, S_{i+1}$ are inhabited by $\phi_{i+1}$ by construction. At the end of this process, we have a subtree $T_m \subset T$ and an embedding $\phi_m:T_m \rightarrow H$ that inhabits all tuples in $\cA$. We then extend $\phi_m$ to an embedding of all of $T$ into $H$ by embedding the extra edges one by one. This is possible because there exists a valid ordering of the edges of $T$ such that $T_m$ consists of a prefix of that ordering, so we can follow the ordering until the end. At every step, the minimum degree of any vertex of $H$ into the set of unoccupied vertices is at least $(1/2 + \gamma) \binom{n}{2} - \nu n^2 \geq (1/2 + \gamma/2) \binom{n}{2}$, so an embedding of the next edge can be chosen greedily.
\end{proof} 

\section{Proof of the main result}\label{sec:proof}
We are now ready to prove our main result.
\begin{proof}[Proof of Theorem~\ref{thm:main}]
Let $1/n \ll \delta \ll \alpha \ll \beta \ll \nu \ll \gamma, 1/\Delta$, where $n$ is odd.
Let $H$ be a $3$-graph on $n$ vertices with $\delta_1(H) \geq (5/9+\gamma)\binom{n}{2}$, and let $T$ be a loose $3$-tree with $\Delta_1(T) \le \Delta$. We show that $T$ can be embedded into $H$.

We first apply Lemma~\ref{lemma:absorbing_set} to get a set $\cA$ of at most $\beta n$ pairwise vertex-disjoint pairs of stars $\{(S_{v_2^i}, S_{v_3^i})\}_i$ such that for every triple $(w_1, w_2, w_3)$ in $H$, we have $|A_{\Delta}(w_1, w_2, w_3) \cap \cA| \geq \alpha n$.

Next, root $T$ arbitrarily at some vertex $r$. Recall that $T(x)$ denotes the subtree of $T$ rooted at $x$. Find a subtree $T(x) \subset T$ of size $\nu n \le v(T(x)) \le 2 \Delta \nu n$. This can be done by setting $x := r$ and, until $x$ has a child $y$ whose subtree has at least $\nu n$ vertices, set $x := y$. At some point this process reaches a vertex $x$ whose subtree $T(x)$ has at least $\nu n$ vertices, but all its children's subtrees have fewer than $\nu n$ vertices, implying that $v(T(x)) \le 2\Delta \nu n$. Let $\nu' := v(T(x)) / n$ and apply Lemma~\ref{lemma:inhabiting_lemma} with $\nu:= \nu'$ and $T:=T(x)$ to find an embedding $\phi:V(T(x)) \rightarrow V(H)$ such that every $\Delta$-absorbing tuple in $\cA$ is inhabited by $\phi$. Suppose $\phi(x) = z \in V(H)$.

Now let $H' := H \setminus (\phi(V(T(x))) \setminus \{z\})$ and note that $\delta_1(H') \geq (5/9 + \gamma/2)\binom{n}{2}$. Let $T' := T \setminus (T(x) \setminus \{x\})$ and root $T'$ at $x$. Remove leaf edges from $T'$ repeatedly to get $T''$ such that $v(T') - v(T'') = \delta n$. Apply Lemma~\ref{lemma:almost_embedding} with $G:= H'$, $T:=T''$, $r:=x$ and $x:=z$ to find an embedding $\phi''$ of $T''$ into $H'$ with $\phi'(x) = z$.

Finally, let $T_0 := T(x) \cup T''$ and note that $v(T_0) = (1- \delta)n$. Combine $\phi$ and $\phi''$ into an embedding $\phi_0$ of $T_0$ into $H$, which can be done since $\phi(T(x)) \cap \phi''(T'') = \phi(x) = \phi''(x)$. Then the tree $T_0$, the embedding $\phi_0$, and the set of $\Delta$-absorbing tuples $\cA$ satisfy the conditions of Lemma~\ref{lemma:absorbing_lemma}, which we can apply to get an embedding of $T$ in $H$.
\end{proof}

\section{Concluding remarks}\label{sec:concluding}

As mentioned in the introduction, the main result in \cite{kss1995} was later generalised by the same authors \cite{kss2001} to trees with maximum degree $c n/\log n$ for some small constant $c$. This is tight up to the constant $c$. It is natural to ask whether Theorem~\ref{thm:main} also holds for trees of maximum degree larger than constant, and if so, how high the maximum degree can be.

In proving Theorem~\ref{thm:main}, we reduced the problem of finding a tree in our $3$-graph $H$ to finding a tight Hamilton cycle $\cC$ in the reduced graph $\cR$ of a regular partition of $H$. To do this we applied a known result on the degree threshold for the existence of a tight Hamilton cycle (Theorem~\ref{thm:degree-condition-tight-Hamiltonicity}, proved in \cite{reiher2019minimum}). However, most vertices of $T$ were embedded into the edges of a perfect matching $M\subseteq \cC$ obtained by taking every third edge of $\cC$. This motivates us to ask whether the application of Theorem~\ref{thm:degree-condition-tight-Hamiltonicity} can be replaced with the analogous result for perfect matchings. If this is the case, the following transference principle should hold for hypergraphs of higher uniformity.

\begin{conjecture}
\label{conj:transference}
Let $1 \leq d < k$ and suppose $\delta_{k,d}^{PM}$ is the minimum $d$-degree threshold for the existence of a perfect matching in $k$-uniform hypergraphs.\\
Then for all $\gamma>0$ and $\Delta\in \N$ there exists $n_0\in \N$ such that any $k$-graph $H$ on $n\ge n_0$ vertices with $\delta_d(H)\ge(\delta_{k,d}^{PM}+\gamma)\binom{n}{k-d}$ contains every loose spanning tree $T$ with $\Delta_1(T)\le \Delta$.
\end{conjecture}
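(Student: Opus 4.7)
The plan is to mirror the three-step architecture of the proof of Theorem~\ref{thm:main}, adapted to $k$-uniform hypergraphs with a minimum $d$-degree condition just above the perfect matching threshold $\delta_{k,d}^{PM}$. The only structural modification that the proof requires (as already indicated by the authors in the concluding remarks) is to replace the use of a tight Hamilton cycle in the reduced graph by the direct use of a perfect matching; the absorbing machinery should generalize essentially verbatim.

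For Steps 1 and 3 I would define a $\Delta$-absorbing tuple for a $k$-tuple $(w_1,\dots,w_k)$ to consist of $k-1$ pairwise vertex-disjoint $\Delta$-stars rooted at $v_2,\dots,v_k$ with $\{w_1,v_2,\dots,v_k\}\in E(H)$, such that each $v_i$ can be swapped with $w_i$ inside its star. The analogues of Lemmas~\ref{lemma:absorbing_set}, \ref{lemma:covering_lemma} and \ref{lemma:absorbing_lemma} should then go through with only routine edits, since each uses only the fact that the minimum $d$-degree of $H$ comfortably exceeds $(1/2)\binom{n-d}{k-d}$, which is automatic because $\delta_{k,d}^{PM}\ge 1/2$ in all settled regimes. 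In particular, the loose-path-finding argument at the end of the proof of Lemma~\ref{lemma:covering_lemma} extends to $k$-graphs by iterating the same ``heavy-neighbourhood'' averaging $k-1$ times.

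For Step 2 I would apply a $d$-degree version of the weak $k$-graph regularity lemma to produce a reduced $k$-graph $\cR$ with minimum $d$-degree at least $\delta_{k,d}^{PM}+\gamma/2$, which by definition contains a perfect matching $M$. I would then split $T$ into pieces of size between $\beta n$ and $k\Delta\beta n$ exactly as in Lemma~\ref{lemma:vertex_assignment}, assign each piece to an edge of $M$ with sufficiently large leftover in each of its $k$ clusters, and distribute the vertices of that piece across those $k$ clusters according to a $k$-colouring of the rooted loose tree induced by a layering. To connect a piece to its parent piece, which lives in a different matching edge, I would bypass the ``travel along the cycle'' trick and instead construct a constant-length loose path in the host graph $H$ directly, greedily, using only the minimum $d$-degree condition to extend the path vertex by vertex between a leaf of the parent piece and the root of the child piece. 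Since $O(1)$ connector vertices per piece and $O(1/\beta)$ pieces together occupy $o(n)$ vertices, these connectors can be absorbed into the exceptional set $V_0$ without disturbing the balance of leftovers inside matching edges.

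The main obstacle I anticipate is the interaction between the direct loose-path connectors and the expansion invariant carried through Lemma~\ref{lem:typical}: when entering or leaving a piece via a ``non-reduced-graph'' connector, the endpoint vertex must still be expanding inside the regular $k$-tuples of the next matching edge, which demands a careful $k$-uniform analogue of the manual first-layer embedding used at the start of the proof of Lemma~\ref{lemma:almost_embedding}. A secondary, more technical obstacle is establishing a $d$-degree version of Lemma~\ref{lemma:degree-inheritance} across the full range $1\le d<k$: for small $d$ this follows from standard slicing, but for $d$ close to $k-1$ one may be forced to invoke strong hypergraph regularity, which would complicate both the regular-tuple embedding and the absorbing arguments. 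Modulo these two points, the scheme above should deliver Conjecture~\ref{conj:transference} for every $(k,d)$ for which $\delta_{k,d}^{PM}$ itself is known.
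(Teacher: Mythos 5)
First, note that the statement you are proving is stated in the paper as Conjecture~\ref{conj:transference}: the authors do not prove it, and in Section~\ref{sec:concluding} they explicitly identify the obstacle that keeps it open. Your proposal is essentially a restatement of the authors' own speculation from that discussion, and the place where you wave your hands is exactly the place where the conjecture is genuinely hard. The decisive gap is the connection step in your version of Step~2. Once you replace the tight Hamilton cycle in the reduced graph $\cR$ by a perfect matching $M$, the pieces of $T$ assigned to different edges of $M$ must be joined by loose paths, and each such path must \emph{terminate at an expanding vertex inside a prescribed $k$-tuple of clusters} (the matching edge hosting the child piece). You propose to build these connectors ``directly, greedily, using only the minimum $d$-degree condition''. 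But a greedy extension argument only shows that a loose path can be continued \emph{somewhere}; it gives no control over which clusters the path enters. Already for $(k,d)=(1,3)$: the link of a vertex $u$ is a graph of density $5/9+\gamma$ on $V(H)$, and it may contain no pair with one endpoint in $V_i$ and one in $V_j$ for the specific pair of clusters you need to reach, since such pairs form only an $O(1/t^2)$ fraction of all pairs. Iterating does not obviously help: you would need a statement of the form ``from any vertex there is a bounded-length loose path ending at an expanding vertex of any prescribed cluster-tuple of $M$'', which is a connectivity property of $H$ \emph{relative to the regularity partition}. The authors state in Section~\ref{sec:almost} that precisely such connectivity properties ``are not guaranteed by our minimum vertex degree condition'', which is why they route all connections through a tight Hamilton cycle in $\cR$ — and why, since the tight Hamiltonicity threshold exceeds $\delta_{k,d}^{PM}$ for large $k-d$, the general case remains open. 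Your sketch flags this as ``the main obstacle I anticipate'' but does not resolve it; without resolving it the argument does not close.

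Two further points. Your claim that the reduced graph ``by definition contains a perfect matching'' needs care: $\delta_{k,d}^{PM}$ is an asymptotic threshold, so you must arrange $t$ large and divisible by $k$, and you must verify a $d$-degree inheritance lemma in the weak-regularity setting (you correctly note this is nontrivial for $d$ close to $k-1$; the paper only has the case $d=1$, $k=3$ via Lemma~\ref{lemma:degree-inheritance}). By contrast, the parts of your plan concerning Steps~1 and~3 are plausible: the absorbing tuple generalization, Lemmas~\ref{lemma:absorbing_set}--\ref{lemma:absorbing_lemma}, and the short-path argument in Lemma~\ref{lemma:covering_lemma} indeed use only that the normalized minimum $d$-degree exceeds $1/2+\gamma$, which holds since $\delta_{k,d}^{PM}\ge 1/2$ by the standard parity construction. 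So the absorption half of the programme is routine; the almost-spanning embedding is where a new idea is required, and your proposal does not supply one.
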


This conjecture holds for $d=k-1$ by the main result in \cite{pavez2024dirac}. In this paper we showed that it also holds for $(d,k)=(1,3)$.
Note that, as discussed in~\cite{lang2022minimum}, the minimum $d$-degree threshold for containment of tight Hamilton cycles is strictly larger than $\delta_{k,d}^{PM}$ when $k-d$ is large. This implies that to prove Conjecture~\ref{conj:transference} in full generality using the Regularity Lemma, one has to move away from relying on tight Hamilton cycles in the cluster graph.

There are other notions of a hypertree for which considering minimum degree conditions would be interesting, such as Berge trees, which are more general than $\ell$-trees, and $k$-expansion trees, which are a special case of loose trees (see e.g.~\cite{stein2020tree}). Every instance of the latter can be obtained from a $2$-tree by adding $k-2$ new vertices to each edge. It seems that for $k$-expansion trees, the threshold should be the same as for loose Hamilton cycles. We believe that our techniques can be extended in a straightforward way to show this.
\begin{conjecture}
Let $1 \leq d < k$ and suppose $\delta_{k,d}^{LHC}$ is the minimum $d$-degree threshold for the existence of a loose Hamilton cycle in $k$-uniform hypergraphs.\\
Then for all $\gamma>0$ and $\Delta \in \mathbb{N}$ there exists $n_0 \in \mathbb{N}$ such that any $k$-graph $H$ on $n \geq n_0$ vertices with $\delta_d(H) \geq (\delta^{LHC}_{k,d} + \gamma)\binom{n}{k-d}$ contains every $k$-expansion spanning tree $T$ with $\Delta_1(T) \leq \Delta$.
\end{conjecture}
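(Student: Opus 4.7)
The plan is to follow the three-step absorption scheme of Theorem~\ref{thm:main} --- build a family $\cA$ of vertex-disjoint absorbers, cover almost all of $T$ via a regularity-based almost-embedding, then use $\cA$ to close --- replacing the tight Hamilton cycle in the reduced graph by a \emph{loose} Hamilton cycle. The reason this should go through cleanly is that a $k$-expansion tree $T$ carries an underlying $2$-uniform tree $T^\circ$ (the two ``original'' vertices of each $k$-edge of $T$), and the remaining $k-2$ vertices in every $k$-edge have degree exactly $1$ in $T$ and so are completely free to place. In a loose Hamilton cycle $\cC$ of the reduced graph $\cR$, consecutive edges share exactly one cluster, which is precisely the interaction available between consecutive $k$-edges of $T$ that share a vertex of $T^\circ$; thus the combinatorial shape of $\cC$ is naturally suited to $T$.

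For the absorption step I would define a $\Delta$-absorbing tuple for a $k$-tuple $(w_1,\dots,w_k)$ as a pair of vertex-disjoint $\Delta$-stars centred at two anchor vertices $v_2,v_3$ such that a single $k$-edge through $w_1,v_2,v_3$ together with the ``swap'' at $v_2,v_3$ extends any embedding of $T$ covering the stars. The random-selection proof of Lemma~\ref{lemma:absorbing_set} transfers directly: it only needs every $k$-tuple to participate in polynomially many absorbing configurations, which follows from $\delta^{LHC}_{k,d}>1/2$ and the hypothesis $\delta_d(H)\ge(\delta^{LHC}_{k,d}+\gamma)\binom{n-d}{k-d}$. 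Lemma~\ref{lemma:covering_lemma} adapts as well, since the short loose paths used to attach absorbers exist by the same degree bound.

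For the almost-embedding, I would prove $k$-uniform analogues of Lemmas~\ref{lemma:vertex_assignment} and~\ref{lemma:almost_embedding}. Apply a $k$-uniform weak regularity lemma with $d$-degree inheritance; the reduced graph $\cR$ then has $\delta_d(\cR)\ge(\delta^{LHC}_{k,d}+\gamma/2)\binom{t}{k-d}$ and so, by definition of the threshold, contains a loose Hamilton cycle $\cC$. Split $T^\circ$ into pieces of size $\Theta(\beta n)$, $2$-colour each piece, and assign every piece to the two ``outer'' clusters of an edge in a near-perfect matching of $\cC$ obtained by taking every other edge, distributing the $k-2$ degree-one vertices of each $k$-edge of $T$ arbitrarily among the remaining $k-2$ clusters of that loose edge. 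Connect successive pieces by walking one loose edge of $\cC$ at a time, so that each hop costs just one $T^\circ$-vertex (placed in the cluster shared by two consecutive loose edges) plus the corresponding $k-2$ free vertices. A balance invariant keeps each cluster's allocation below $(1-\zeta)|V_i|$, and a $k$-uniform analogue of Lemma~\ref{lem:typical} --- expansion into all loose edges of $\cC$ incident to the current cluster --- realises the assignment greedily.

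The main obstacle is the design of this assignment and traversal. The $3$-uniform tight-cycle proof exploited a clean $3$-periodic structure; here one must simultaneously balance usage across all $k$ coordinates of every loose edge while ensuring that every connector hop lands the next root of a piece in the unique shared cluster between two consecutive loose edges of $\cC$. One has to choose carefully which loose edges of $\cC$ serve as ``matching'' edges (absorbing whole pieces of $T^\circ$) and which serve as connectors. Moreover, since the tight-Hamilton-cycle threshold strictly exceeds $\delta^{LHC}_{k,d}$ for $k-d$ large, we cannot fall back on a tight-cycle argument and must genuinely exploit the loose structure; fortunately the matching formed by every other edge of $\cC$ provides enough room to absorb the bulk of $T$, while the single-vertex overlaps of $\cC$ furnish connectivity essentially for free. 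Once this assignment lemma is in place, the typicality-based almost-embedding and the absorption step follow the $3$-uniform arguments with only notational changes.
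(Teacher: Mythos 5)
First, note that the paper does not prove this statement: it is posed as an open conjecture in the concluding remarks, so there is no proof of record to compare against, and your proposal must stand on its own. As written it has at least one fatal gap and one serious quantitative flaw.

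The fatal gap is your justification of the absorption step. You assert that Lemma~\ref{lemma:absorbing_set} ``transfers directly'' because $\delta^{LHC}_{k,d}>1/2$, but this inequality is false: the known loose Hamilton cycle thresholds are \emph{below} $1/2$ (for $(k,d)=(3,1)$ the threshold is $7/16$, and for $d=k-1$ it is $\tfrac{1}{2(k-1)}$). The constant $1/2$ in Lemmas~\ref{lemma:absorbing_set} and~\ref{lemma:covering_lemma} is not incidental: both rely on inclusion--exclusion to show that two prescribed vertices $w_j,v_j$ have $\Omega(n^{k-1})$ common link-sets (so that a star at $v_j$ can be swapped onto $w_j$), and to find short loose connecting paths between two prescribed vertices. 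Below relative density $1/2$ two vertices may have empty common link, so the paper's absorbing tuples need not exist at all at density $\delta^{LHC}_{k,d}+\gamma$. (In the proof of Theorem~\ref{thm:main} this is harmless only because $5/9>1/2$.) To prove the conjecture one must design a genuinely different absorbing structure that functions at the loose-Hamiltonicity density, as in \cite{buss2013minimum,han2015minimum}; this is a missing idea, not a notational change.

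Separately, your almost-embedding scheme does not produce an almost-\emph{spanning} embedding. Taking every other edge of a loose Hamilton cycle $\cC$ on $t$ clusters yields only $\tfrac{t}{2(k-1)}$ pairwise disjoint edges covering a $\tfrac{k}{2(k-1)}$ fraction of the clusters, so the internal clusters of the skipped edges (a quarter of all clusters already for $k=3$) receive only the $O(1)$ connector vertices. Moreover, within each matching edge you propose to place $T^\circ$-vertices only in the two outer clusters and expansion vertices in the $k-2$ inner ones, but $T$ supplies these in ratio roughly $1:(k-2)$ while the capacities are in ratio $2:(k-2)$, so the outer clusters can be at most half-filled when the inner ones are full. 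For $k=3$ this leaves roughly half of $V(H)$ uncovered, far beyond what any absorbing family of size $O(\beta n)$ can repair. A correct assignment must use essentially all edges of $\cC$ and exploit that each shared cluster is served from both of its incident loose edges; you flag the assignment as the main obstacle, but the specific construction you commit to demonstrably fails, so this part of the argument also remains open.
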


\bibliographystyle{habbrv}
\bibliography{ref}

\begin{thebibliography}{10}

\bibitem{bottcher2019universality}
J.~B\"{o}ttcher, J.~Han, Y.~Kohayakawa, R.~Montgomery, O.~Parczyk, and
  Y.~Person.
\newblock Universality for bounded degree spanning trees in randomly perturbed
  graphs.
\newblock {\em Random Structures Algorithms}, 55(4):854--864, 2019.

\bibitem{bottcher2020embedding}
J.~B\"{o}ttcher, R.~Montgomery, O.~Parczyk, and Y.~Person.
\newblock Embedding spanning bounded degree graphs in randomly perturbed
  graphs.
\newblock {\em Mathematika}, 66(2):422--447, 2020.

\bibitem{buss2013minimum}
E.~Bu\ss, H.~H\`an, and M.~Schacht.
\newblock Minimum vertex degree conditions for loose {H}amilton cycles in
  3-uniform hypergraphs.
\newblock {\em J. Combin. Theory Ser. B}, 103(6):658--678, 2013.

\bibitem{chung1991regularity}
F.~R.~K. Chung.
\newblock Regularity lemmas for hypergraphs and quasi-randomness.
\newblock {\em Random Structures Algorithms}, 2(2):241--252, 1991.

\bibitem{csaba2010}
B.~Csaba, I.~Levitt, J.~Nagy-Gy\"{o}rgy, and E.~Szemer\'{e}di.
\newblock Tight bounds for embedding bounded degree trees.
\newblock In {\em Fete of combinatorics and computer science}, volume~20 of
  {\em Bolyai Soc. Math. Stud.}, pages 95--137. J\'{a}nos Bolyai Math. Soc.,
  Budapest, 2010.

\bibitem{frankl1992uniformity}
P.~Frankl and V.~R\"{o}dl.
\newblock The uniformity lemma for hypergraphs.
\newblock {\em Graphs Combin.}, 8(4):309--312, 1992.

\bibitem{han2009_pm}
H.~H\`an, Y.~Person, and M.~Schacht.
\newblock On perfect matchings in uniform hypergraphs with large minimum vertex
  degree.
\newblock {\em SIAM J. Discrete Math.}, 23(2):732--748, 2009.

\bibitem{han2015minimum}
J.~Han and Y.~Zhao.
\newblock Minimum vertex degree threshold for loose {H}amilton cycles in
  3-uniform hypergraphs.
\newblock {\em J. Combin. Theory Ser. B}, 114:70--96, 2015.

\bibitem{kalai83}
G.~Kalai.
\newblock Enumeration of {${\bf Q}$}-acyclic simplicial complexes.
\newblock {\em Israel J. Math.}, 45(4):337--351, 1983.

\bibitem{kathapurkar22}
A.~Kathapurkar and R.~Montgomery.
\newblock Spanning trees in dense directed graphs.
\newblock {\em J. Combin. Theory Ser. B}, 156:223--249, 2022.

\bibitem{khan2013perfect}
I.~Khan.
\newblock Perfect matchings in 3-uniform hypergraphs with large vertex degree.
\newblock {\em SIAM J. Discrete Math.}, 27(2):1021--1039, 2013.

\bibitem{kss1995}
J.~Koml\'{o}s, G.~N. S\'{a}rk\"{o}zy, and E.~Szemer\'{e}di.
\newblock Proof of a packing conjecture of {B}ollob\'{a}s.
\newblock {\em Combin. Probab. Comput.}, 4(3):241--255, 1995.

\bibitem{kss2001}
J.~Koml\'{o}s, G.~N. S\'{a}rk\"{o}zy, and E.~Szemer\'{e}di.
\newblock Spanning trees in dense graphs.
\newblock {\em Combin. Probab. Comput.}, 10(5):397--416, 2001.

\bibitem{treglown2011full}
D.~K\"{u}hn, D.~Osthus, and A.~Treglown.
\newblock Matchings in 3-uniform hypergraphs.
\newblock {\em J. Combin. Theory Ser. B}, 103(2):291--305, 2013.

\bibitem{lang2022minimum}
R.~Lang and N.~Sanhueza-Matamala.
\newblock Minimum degree conditions for tight {H}amilton cycles.
\newblock {\em J. Lond. Math. Soc. (2)}, 105(4):2249--2323, 2022.

\bibitem{linial19}
N.~Linial and Y.~Peled.
\newblock Enumeration and randomized constructions of hypertrees.
\newblock {\em Random Structures Algorithms}, 55(3):677--695, 2019.

\bibitem{mycroft_naia_2}
R.~Mycroft and T.~Naia.
\newblock Spanning trees of dense directed graphs.
\newblock In {\em The proceedings of {L}agos 2019, the tenth {L}atin and
  {A}merican {A}lgorithms, {G}raphs and {O}ptimization {S}ymposium ({LAGOS}
  2019)}, volume 346 of {\em Electron. Notes Theor. Comput. Sci.}, pages
  645--654. Elsevier Sci. B. V., Amsterdam, 2019.

\bibitem{mycroft_naia_1}
R.~{Mycroft} and T.~{Naia}.
\newblock {Trees and tree-like structures in dense digraphs}.
\newblock {\em arXiv e-prints}, Dec. 2020, arXiv:2012.09201.

\bibitem{pavez2024dirac}
M.~Pavez-Sign{\'e}, N.~Sanhueza-Matamala, and M.~Stein.
\newblock Dirac-type conditions for spanning bounded-degree hypertrees.
\newblock {\em Journal of Combinatorial Theory, Series B}, 165:97--141, 2024.

\bibitem{reiher2019minimum}
C.~Reiher, V.~R\"{o}dl, A.~Ruci\'{n}ski, M.~Schacht, and E.~Szemer\'{e}di.
\newblock Minimum vertex degree condition for tight {H}amiltonian cycles in
  3-uniform hypergraphs.
\newblock {\em Proc. Lond. Math. Soc. (3)}, 119(2):409--439, 2019.

\bibitem{roth1953certain}
K.~F. Roth.
\newblock On certain sets of integers.
\newblock {\em J. London Math. Soc.}, 28:104--109, 1953.

\bibitem{stein2020tree}
M.~Stein.
\newblock Tree containment and degree conditions.
\newblock In {\em Discrete mathematics and applications}, volume 165 of {\em
  Springer Optim. Appl.}, pages 459--486. Springer, Cham, [2020] \copyright
  2020.

\bibitem{szemeredi1975sets}
E.~Szemer\'{e}di.
\newblock On sets of integers containing no {$k$} elements in arithmetic
  progression.
\newblock {\em Acta Arith.}, 27:199--245, 1975.

\bibitem{szemeredi1975regular}
E.~Szemer\'{e}di.
\newblock Regular partitions of graphs.
\newblock In {\em Probl\`emes combinatoires et th\'{e}orie des graphes
  ({C}olloq. {I}nternat. {CNRS}, {U}niv. {O}rsay, {O}rsay, 1976)}, volume 260
  of {\em Colloq. Internat. CNRS}, pages 399--401. CNRS, Paris, 1978.

\end{thebibliography}

\end{document}